\newtheorem{thm}{Theorem}
\newtheorem{lem}{Lemma}
\newtheorem{cor}{Corollary}
\newtheorem{con}{Conjecture}
\newcommand{\be}{\begin{equation}}
\newcommand{\ee}{\end{equation}}
\newcommand\cyr{%
\renewcommand\rmdefault{wncyr}%
\renewcommand\sfdefault{wncyss}%
\renewcommand\encodingdefault{OT2}%
\normalfont
\selectfont}
\DeclareTextFontCommand{\textcyr}{\cyr}
\title[Small zeros of Dirichlet $L$-functions of prime modulus]
{Small zeros of Dirichlet $L$-functions of quadratic characters of prime modulus}
\author{Julio Andrade}
\address{Department of Mathematics, University of Exeter, North Park Road,
Exeter EX4 4QF, UK}
\email{j.c.andrade@exeter.ac.uk}
\author{Siegfred Baluyot}
\address{Department of Mathematics, University of Rochester, Rochester, NY 14627, USA}
\curraddr{Department of Mathematics, University of Illinois, Urbana, IL 61801, USA}
\email{sbaluyot@illinois.edu}
\thanks{JCA would like to thank the Department of Mathematics at the University of Rochester for hospitality during a visit where this work started. JCA was partially supported by a Research in Pairs - Scheme 4 London Mathematical Society grant.}
 \subjclass[2010]{Primary 11M06, Secondary 11M50}
 \keywords{One-level density, random matrix theory, ratios conjectures, Katz-Sarnak philosophy, non-vanishing results}
\begin{document}

 \maketitle

\begin{abstract}

In this paper, we investigate the distribution of the imaginary parts of zeros
near the real axis of Dirichlet $L$-functions associated to the quadratic characters
$\chi_{p}(\cdot)=(\cdot |p)$ with $p$ a prime number. Assuming the Generalized
Riemann Hypothesis (GRH), we compute the one-level density for the zeros of this family of
$L$-functions under the condition that the Fourier transform of the test function is
supported on a closed subinterval of $(-1,1)$. We also write down the
ratios conjecture for this family of $L$-functions a la Conrey, Farmer and Zirnbauer
and derive a conjecture for the one-level density which is consistent with the main theorem
of this paper and with the Katz-Sarnak prediction and includes lower order terms. Following the methods
of \"{O}zl\"{u}k and Snyder, we prove that GRH implies $L(\frac{1}{2},\chi_p)\neq 0$ for at least $75\%$ of the primes.

\end{abstract}



\section{Introduction}

The distribution of the imaginary parts of zeros near the real axis
of $L$-functions is an important theme in analytic
number theory. The zeros of Dirichlet $L$-functions close to the real
axis encapsulate important information about several number theoretical
quantities. For example, the zeros of $L(s,\chi)$ near to $s=1/2$, with
$\chi$ a quadratic character such that $\chi(-1)=-1$, are related to
the class numbers of complex quadratic fields. In another direction, if
$L(s,\chi)$ is the Dirichlet $L$-function associated to the non-principal
character $\chi$ modulo $4$ then the first low zeros of $L(s,\chi)$ dictate
how the primes are distributed in residue classes $1$ and $3$ $\pmod 4$.
For further details on how low zeros of Dirichlet $L$-functions are related
to problems in number theory we refer the reader to \cite{BP, Ben, MW, Wei}.

\"{O}zl\"{u}k and Snyder \cite{ozluksnyder1,ozluksnyder2} have studied the distribution of low-lying
zeros of quadratic Dirichlet $L$-functions $L(s,(d|\cdot))$ for all fundamental
discriminants $d$ that have absolute value less than or equal to a given
constant $D$. Their approach is to investigate the asymptotic properties of the \textit{form factor}
$$
F(\alpha,D) \ = \ \left( \frac{1}{2\zeta(2)}K\left( \frac{1}{2}\right)D \right)^{-1}\sum_{d\in\mathcal{F}(D)}\sum_{\rho(d)}K(\rho)D^{i\alpha\gamma}
$$
as $D\rightarrow\infty$. Here, $K$ is a suitable kernel, the outer sum is over all fundamental
discriminants $d$ with $|d|\leq D$, and the inner sum is over the non-trivial zeros of $L(s,\chi_{d})$, where
$\chi_{d}(n)$ is defined by the Kronecker symbol $(d|n)$. Assuming the Generalized Riemann Hypothesis (GRH), \"{O}zl\"{u}k and Snyder~\cite[Theorem 3]{ozluksnyder1} established a formula for the one-level
density for the zeros of $L(s,\chi_{d})$, and improved their formula in a subsequent paper~\cite[Corollary 2]{ozluksnyder2}. Their results agree with those of Katz and Sarnak~\cite[page 16]{KS99}.

The formulas for the one-level density can be interpreted in the following form, as discussed in Entin, Roditty-Gershon, and Rudnick~\cite{ERR}. To simplify the discussion, we restrict to discriminants of the form $8d$, with $d>0$ an odd square-free integer. Thus the corresponding quadratic character $\chi_{8d}$ is primitive and even with conductor $8d$.
We can define the linear statistic of zeros of $L(s\chi_{8d})$, or one-level density, by taking $f$ to be an even Schwartz function
and setting
\begin{equation*}
W_{f}(d) \ = \ \sum_{\gamma}f\left(\frac{\gamma\log X}{2\pi}\right),
\end{equation*}
where the sum is over all the zeros $\frac{1}{2}+i\gamma$ of $L(s,\chi_{8d})$. Let $\mathcal{D}(X)$ denote the set of odd square-free $d$ in the interval $[X,2X]$. Katz and Sarnak~\cite{KS99} proved under the assumption of GRH that, in the limit as $X\rightarrow\infty$, the expected value of $W_f(d)$ over
the ensemble $\mathcal{D}(X)$ coincides with the analogous quantity for the eigenphases of random matrices from unitary symplectic groups $USp(2N)$ in the limit as
$N\rightarrow\infty$. In other words,
\begin{equation}\label{DensityConjecture}
\lim_{X\rightarrow\infty}\frac{1}{\#\mathcal{D}(X)} \sum_{d\in\mathcal{D}(X)}W_{f}(d)\Phi\left( \frac{d}{X}\right) \ = \ \int_{-\infty}^{\infty}f(x)\left( 1-\frac{\sin2\pi x}{2\pi x}\right)dx,
\end{equation}
under the restriction that the Fourier transform $\widehat{f}(u)=\int_{\mathbb{R}}f(x)e^{-2\pi ixu}dx$ is supported in the interval $|u|<2$, where $\Phi$ is a smooth weight function supported in the interval $(1,2)$ such that $\int\Phi(u)du=1$. An earlier version of this formula, in a different form, was proved by \"{O}zl\"{u}k and Snyder~\cite{ozluksnyder1,ozluksnyder2}. The \textit{Density Conjecture} of Katz and Sarnak~\cite{KS99} is the conjecture that \eqref{DensityConjecture} holds for any test function $f$, without restrictions on its Fourier transform. This is still intractable with the current technology but in the last few years striking progress was obtained in our understanding of the distribution of zeros of Dirichlet $L$-functions by several authors including Fiorilli and Miller \cite{FM}, Gao \cite {G}, Hughes and Rudnick \cite{HR}, Levinson and Miller \cite{LM}, Miller \cite{Mi1} and Rubinstein \cite{Rub} to quote some of them.

In this paper we study the one-level density for the family of quadratic Dirichlet $L$-functions associated to the characters $\chi_{p}$ defined by the Legendre symbol $(\cdot |p)$, where $p$ is a prime. One of the first authors to investigate this family of $L$-functions was Jutila in 1981 \cite{Jutila}. Jutila proved an asymptotic formula for the first moment at the central point for this family of $L$-functions. Studying this family of $L$-functions seems to be more difficult than studying the family of quadratic Dirichlet $L$-functions $L(s,\chi_{8d})$. One reason for this is that averages over prime numbers bring extra difficulties which we do not face when dealing with averages over square-free numbers. For example, asymptotic formulas for the first three moments of $L(\frac{1}{2},\chi_{8d})$ over $\mathcal{D}(X)$ are known, with the first two essentially due to Jutila~\cite{Jutila}, and the third due to Soundararajan~\cite{Sou}. However, an asymptotic formula is known only for the first moment of $L(\frac{1}{2},\chi_{p})$ over the primes $p\leq X$~\cite{Jutila}. In the function field setting, Andrade and Keating~\cite{AK} have established formulas for the first and second moments of the analogue of this family.

The main aim of this paper is to initiate an investigation of the statistical distribution of the zeros for the family of quadratic Dirichlet $L$-functions $L(s,\chi_{p})$ and to understand the main differences when dealing with $L(s,\chi_{p})$ instead of $L(s,\chi_{d})$. We assume the Generalized Riemann Hypothesis (GRH) all throughout. Our main result is analogous to the formula \eqref{DensityConjecture}, and holds for test functions $f$ with Fourier transform $\hat{f}(u)$ supported in the interval $|u|<1$. We follow the approach of \"{O}zl\"{u}k and Snyder~\cite{ozluksnyder1,ozluksnyder2} and develop a formula for the form factor for this family. Whereas \"{O}zl\"{u}k and Snyder average over all fundamental discriminants $d$, we average over the more restricted set of primes $p$. Averaging over all fundamental discriminants allows one to use the Poisson summation formula, as \"{O}zl\"{u}k and Snyder~\cite{ozluksnyder2} do. However, there does not seem to be an analogue for Poisson summation when averaging over the primes. Thus we resort to directly estimating the contribution of the off-diagonal terms by bounding the character sums, as in Lemma~\ref{GRHbound} in Section~\ref{offdiag} below. 
This bound, on GRH, allows us to prove an asymptotic formula for the one-level density of zeros when $\hat{f}(u)$ is supported in the interval $|u|<1$. It would be interesting to see how the support of $\hat{f}(u)$ can be increased to deduce results of the same quality as \"{O}zl\"{u}k and Snyder.

Aside from determining the one-level density of zeros for the family of $L(s,\chi_p)$, we also use our formula for the form factor to prove that, under GRH, $L(\frac{1}{2},\chi_{p})$ is nonzero for at least $75\%$ of the primes. The analogous conditional result for the family of $L(s,\chi_{d})$, due to \"{O}zl\"{u}k and Snyder~\cite{ozluksnyder2}, is that $L(\frac{1}{2},\chi_{d})$ is nonzero for at least $93.75\%$ of the fundamental discriminants $d$. Soundararajan~\cite{Sou} has proved unconditionally that $L(\frac{1}{2},\chi_{d})\neq 0$ for at least $87.5\%$ of the fundamental discriminants. As far as we know, the current best unconditional nonvanishing result for the family of $L(s,\chi_{p})$ is due to Jutila~\cite{Jutila}, who proved that $L(\frac{1}{2},\chi_{p})\neq 0$ for infinitely many primes $p\equiv v$ (mod~$4$) for each of $v=1$ and $v=3$.

Besides studying the distribution of zeros of the family of $L$-functions $L(s,\chi_{p})$, we also write down the ratios conjecture for this family, following the ideas of Conrey, Farmer, and Zirnbauer~\cite{CFZ} and Conrey and Snaith~\cite{CS}. Using the ratios conjecture, we infer a precise formula for the average one-level density, complete with lower order terms. The consequence of the ratios conjecture is consistent with the Katz-Sarnak prediction that the analogue of \eqref{DensityConjecture} holds for any test function $f$. In future work, we plan to develop the ideas of this paper further and study the $n$-level density of the same family of $L$-functions.


\section{Statement of Results}

We use the standard notation $s=\sigma+it$ for the real and imaginary parts of the complex variable $s$. As is standard in analytic number theory, we use the variable $\varepsilon$ to represent an arbitrarily small positive real number that may not be the same in each instance. Throughout the paper, $v$ denotes a fixed integer that is either $1$ or $3$. This allows us to prove results for the cases $p\equiv 1$ and $p\equiv 3$ (mod $4$) simultaneously. Also, let $\mbox{Li}(X) $ denote the logarithmic integral
\begin{equation}\label{Li}
\begin{split}
\mbox{Li}(X) \ = \ \int_2^X \frac{du}{\log u} \ = \ \frac{X}{\log X} \ + \ O\left(\frac{X}{\log^2 X}\right).
\end{split}
\end{equation}

To state our main result, we suppose that $K(s)$ is a function with the following properties: (i) it is analytic for $-1<\sigma<2$; (ii) $K(\frac{1}{2}+it)=K(\frac{1}{2}-it)$ for all real $t$; (iii) $K(\frac{1}{2}+it)$ has rapid decay as $|t|\rightarrow \infty$; (iv) $K(1/2)\neq 0$; and (v) $K(s) = o(\log^{-2}|t|)$ uniformly for $-1<\sigma<2$ as $|t|\rightarrow \infty$. We further assume that
\begin{equation}\label{Kbound2}
\int_{-\infty}^{\infty} |K(c+it)|\log(|t|+2) \,dt \ < \ \infty
\end{equation}
for $-1<c<2$. We suppose that the integral $a(y)$ defined by
\begin{equation}\label{aadef}
a(y) \ = \ \frac{1}{2\pi i} \int_{(c)} K(s) y^{-s}\,ds
\end{equation}
is absolutely convergent for $y>0$ and $-1<c<2$, and assume that
\begin{equation}\label{abound2}
\int_0^{\infty} |a'(u)|u  \,du  \ < \ \infty.
\end{equation}
These assumptions about $K(s)$ seem to be reasonable, and are satisfied by many functions, including $K(s)=\exp((s-\frac{1}{2})^2)$.

The low-lying zeros of the family of $L$-functions $L(s,\chi_p)$, where $\chi_p=(\cdot |p)$ is the Legendre symbol, are expected to display the same statistics as the eigenvalues of the matrices from $USp(2N)$, chosen with respect to the Haar measure. Our main result gives a formula for the average one-level density for the zeros of this family.
\begin{thm}\label{thm2}
Assume GRH and the assumptions for $K(s)$ given above. Let $r(\alpha)$ be a continuous even function in $L^2(-\infty,\infty)$ such that $\int_{-\infty}^{\infty} |\alpha r(\alpha)|\,d\alpha$ converges and $\hat{r}(\alpha)$ is supported on the interval $[-1+\varepsilon,1-\varepsilon]$. Then, for $v=1$ or $v=3$,

\begin{equation*}
\begin{split}
\left(\frac{1}{2}K\left( \frac{1}{2}\right)\mbox{Li}(X) \right)^{-1} & \sum_{\begin{subarray}{c} p\leq X \\ p\equiv v (\mbox{\scriptsize mod } 4)\end{subarray}} \sum_{\rho(p)} K(\rho)\,r\left(\frac{\gamma \log X}{2\pi}\right) \\
\ = \ & \int_{-\infty}^{\infty} \left( 1-\frac{\sin (2\pi \alpha)}{2\pi\alpha}\right) r(\alpha)\,d\alpha \ + \ o(1)
\end{split}
\end{equation*}
as $X\rightarrow\infty$. Here, the sum $\sum_{\rho(p)}$ is over the nontrivial zeros $\rho=\frac{1}{2}+i\gamma$ of $L(s,\chi_p)$.
\end{thm}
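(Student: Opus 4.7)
I would follow the framework of \"{O}zl\"{u}k--Snyder~\cite{ozluksnyder1,ozluksnyder2}, adapted to averages over primes. Since $\widehat r$ is supported in $[-(1-\varepsilon),1-\varepsilon]$ and $\rho=\tfrac12+i\gamma$, Fourier inversion combined with the identity $e^{iu\gamma\log X}=X^{u(\rho-1/2)}$ reduces the inner zero-sum to a form factor:
\begin{equation*}
\sum_{\rho(p)}K(\rho)\,r\!\left(\frac{\gamma\log X}{2\pi}\right) \ = \ \int_{-(1-\varepsilon)}^{1-\varepsilon}\widehat r(u)\,X^{-u/2}\sum_{\rho(p)}K(\rho)X^{u\rho}\,du.
\end{equation*}
It therefore suffices to compute $\sum_{\rho(p)}K(\rho)X^{u\rho}$, averaged over $p\leq X$ with $p\equiv v\pmod 4$, uniformly for $u$ in a compact subset of $(-1,1)$.

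For each $p$, I apply Cauchy's theorem to $-K(s)X^{us}L'(s,\chi_p)/L(s,\chi_p)$ on a narrow rectangle enclosing the critical line. Shifting the right vertical side past $\Re s=1$, where $-L'/L$ equals its Dirichlet series and \eqref{aadef} produces $a(nX^{-u})$-weighted prime sums, and shifting the left side past $\Re s=0$, where the functional equation of $L(s,\chi_p)$ produces a dual prime sum together with archimedean residues involving $\log(p/\pi)$ and $\Gamma'/\Gamma$, yields a representation of the form
\begin{equation*}
\sum_{\rho(p)}K(\rho)X^{u\rho} \ = \ \mathcal{M}_p(u)\ +\ \sum_{n\geq 1}\Lambda(n)\chi_p(n)\bigl[a(nX^{-u})+X^{u}a(nX^{u})\bigr],
\end{equation*}
where conditions (i)--(v) and \eqref{Kbound2}--\eqref{abound2} justify the contour shifts and the absolute convergence. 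Averaging $\mathcal{M}_p(u)$ over $p\leq X$, $p\equiv v\pmod 4$, the $\log p$ factor combined with the prime number theorem for arithmetic progressions ($\sum_{p\leq X,\,p\equiv v}\log p\sim\tfrac12 X$) and the outer integration against $\widehat r(u)$ produces, after the normalization in the theorem, the main term $\widehat r(0)=\int r(\alpha)\,d\alpha$.

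The inner prime sum splits into squares and non-squares. For $n=m^2$ with $p\nmid m$ one has $\chi_p(m^2)=1$, and $\Lambda(m^2)\neq 0$ forces $m=\ell^{j}$ for a prime $\ell$, so the dominant diagonal contribution is $\pi(X;4,v)\sum_{\ell\,\mathrm{prime}}(\log\ell/\ell)\bigl[a(\ell^2 X^{-u})+X^u a(\ell^2 X^u)\bigr]$. Evaluating this $\ell$-sum by partial summation against Mertens' theorem and integrating against $\widehat r(u)$ produces, after normalization, the contribution $-\int_{-1}^{1}\widehat r(u)\,du$. Parseval's identity for the Fourier pair $1-\sin(2\pi\alpha)/(2\pi\alpha)\leftrightarrow\delta(u)-\tfrac12\mathbf{1}_{[-1,1]}(u)$ then shows that the main plus diagonal contributions sum to $\int r(\alpha)\bigl(1-\sin(2\pi\alpha)/(2\pi\alpha)\bigr)\,d\alpha$, which is the stated right-hand side.

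The principal obstacle is the off-diagonal sum $\sum_{n\neq\square}\Lambda(n)\bigl[a(nX^{-u})+X^u a(nX^u)\bigr]\sum_{p\leq X,\,p\equiv v(4)}\chi_p(n)$. I would control this via Lemma~\ref{GRHbound}, which via quadratic reciprocity reinterprets $\sum_p\chi_p(n)$ as a sum of a fixed non-principal Dirichlet character over primes, bounded by $O(X^{1/2}(\log nX)^{2})$ under GRH. Since $a(y)$ decays rapidly as $y\to\infty$, the effective range of $n$ is $n\ll X^{u}\leq X^{1-\varepsilon}$, so the total off-diagonal contribution is $O(X^{1-\varepsilon/2})=o(\mbox{Li}(X))$. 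The support restriction $|u|<1$ is dictated precisely by this step: without a Poisson-summation analogue for averages over primes (which \"{O}zl\"{u}k--Snyder exploit to reach support $(-2,2)$ when averaging over all fundamental discriminants), the GRH character-sum bound is the only available tool, and it just barely suffices in this range.
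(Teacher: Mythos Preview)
Your plan is essentially the paper's proof: the paper first packages the explicit formula into the form factor $F(\alpha,X)$ (Theorem~\ref{thm1}) and then integrates against $\hat r$ exactly as you describe, splitting into the archimedean term $B$, the diagonal $A_1$ (handled there by a contour shift picking up the pole of $\zeta'/\zeta(2s)$ at $s=\tfrac12$ rather than by Mertens), and the off-diagonal $A_2$ bounded via Lemma~\ref{GRHbound} after partial summation. Two small slips to fix when you execute: the diagonal weight is $(\log\ell)\,a(\ell^{2}X^{-u})$, not $(\log\ell/\ell)\,a(\ell^{2}X^{-u})$; and after the theorem's normalization the diagonal contributes $-\tfrac12\int_{-1}^{1}\hat r(u)\,du$ rather than $-\int_{-1}^{1}\hat r(u)\,du$, consistent with the Fourier pair $1-\frac{\sin 2\pi\alpha}{2\pi\alpha}\leftrightarrow \delta(u)-\tfrac12\mathbf{1}_{[-1,1]}(u)$ you invoke.
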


Our proof of Theorem~\ref{thm2} is based on an asymptotic formula for the form factor $F(\alpha,X)$ defined by

\begin{equation}\label{Fdef}
F(\alpha,X) \ = \ \left(\frac{1}{4}K\left( \frac{1}{2}\right)\mbox{Li}(X) \right)^{-1} \sum_{\begin{subarray}{c} p\leq X \\ p\equiv v (\mbox{\scriptsize mod } 4)\end{subarray}} \sum_{\rho(p)} K(\rho) X^{i\alpha\gamma},
\end{equation}
where $\alpha$ is real, $X>1$, and the inner sum is over the nontrivial zeros $\rho=\frac{1}{2}+i\gamma$ of $L(s,\chi_p)$. Our asymptotic formula is as follows.

\begin{thm}\label{thm1}
Assume GRH. Suppose that $K(s)$ and $a(y)$ are as given above Theorem~\ref{thm2}. If $M>0$, $X\geq 2$, and $\alpha \in\mathbb{R}$, then

\begin{equation*}
\begin{split}
F(\alpha,X) \ = \ & -1 \ + \ \left( \frac{1}{2}K\left( \frac{1}{2}\right) \right)^{-1}X^{-\alpha/2}a\left( X^{-\alpha}\right)\left( \frac{X-\mbox{Li}(X)\log 2\pi}{\mbox{Li}(X)}\right) \\
& \ + \ O\left( X^{\frac{|\alpha|}{2}-\frac{1}{2}}\log^2 X\right) \ + \ O\left(X^{\left(-\frac{1}{4}+\varepsilon\right)|\alpha|}\right) \\
& \ + \ O\left(X^{-\frac{1}{2}+ \left(-\frac{3}{2}+\varepsilon\right)|\alpha|}\log^3 X\right) \ + \ O\left( \frac{X^{|\alpha|}\log^2 X}{X^{M/2}} \right),
\end{split}
\end{equation*}
with implied constants depending at most on $K(s)$, $M$, or $\varepsilon$.
\end{thm}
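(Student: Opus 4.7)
The plan is to follow the explicit-formula approach of \"{O}zl\"{u}k and Snyder, replacing their Poisson-summation step (unavailable when averaging over primes) with the GRH-conditional character sum estimate of Lemma~\ref{GRHbound}. For each $p \equiv v \pmod{4}$, I would start from
\begin{equation*}
\sum_{\rho(p)} K(\rho)\, X^{i\alpha\gamma} \;=\; \frac{1}{2\pi i}\!\left[\int_{(c)}-\int_{(1-c)}\right]\frac{L'}{L}(s,\chi_p)\,K(s)\,X^{\alpha(s-1/2)}\,ds,
\end{equation*}
valid for any $1 < c < 2$ by the residue theorem, taking care of the trivial zero at $s=0$ (present only when $v=1$) by an explicit subtraction. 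The functional equation $\Lambda(s,\chi_p) = \Lambda(1-s,\chi_p)$ yields
\begin{equation*}
\frac{L'}{L}(1-s,\chi_p) \;=\; -\log\frac{p}{\pi} -\tfrac{1}{2}\psi\!\left(\tfrac{s+a}{2}\right) -\tfrac{1}{2}\psi\!\left(\tfrac{1-s+a}{2}\right) -\frac{L'}{L}(s,\chi_p)
\end{equation*}
with $a = (1-\chi_p(-1))/2$; substituting $s \mapsto 1-s$ in the second integral and using $K(1-s) = K(s)$ reduces everything to integrals on the single line $(c)$.

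On $(c)$ the Dirichlet series $\frac{L'}{L}(s,\chi_p) = -\sum_n \Lambda(n)\chi_p(n)n^{-s}$ converges absolutely, so term-by-term integration against \eqref{aadef} transforms the $L'/L$-parts into sums $-X^{\mp\alpha/2}\sum_n \Lambda(n)\chi_p(n)\,a(nX^{\pm\alpha})$, while the $\log(p/\pi)$-piece integrates to $\log(p/\pi)\cdot X^{\alpha/2} a(X^{\alpha})$. The digamma integral can be shifted to the critical line, and the residue picked up at $s=1$ (only when $v=1$) equals $K(1)X^{-\alpha/2} = K(0)X^{-\alpha/2}$, which exactly cancels the trivial-zero correction above, giving a uniform treatment for $v=1,3$. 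After summing over $p\le X$ with $p \equiv v \pmod{4}$, the $\log(p/\pi)$ contribution is evaluated via the GRH-form of the prime number theorem in arithmetic progressions, using $\sum_{p\le X,\,p\equiv v(4)}\log p = \tfrac12 X + O(\sqrt X \log^2 X)$ and $\pi(X;4,v) = \tfrac12\mbox{Li}(X) + O(\sqrt X \log X)$, and combining the $-\log\pi$ it produces with the constant from the digamma integral yields the $-\log 2\pi$. Dividing by the normalising factor $\tfrac14 K(\tfrac12)\mbox{Li}(X)$ and converting $X^{\alpha/2}a(X^\alpha)$ to $X^{-\alpha/2}a(X^{-\alpha})$ via the identity $y\cdot a(y) = a(1/y)$ (a consequence of $K(s) = K(1-s)$ obtained by a single contour shift) recovers the stated main term $-1 + (\tfrac12 K(\tfrac12))^{-1} X^{-\alpha/2}a(X^{-\alpha})(X - \mbox{Li}(X)\log 2\pi)/\mbox{Li}(X)$.

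The remaining off-diagonal sums
\begin{equation*}
\sum_{n}\Lambda(n)\, a(nX^{\pm\alpha})\!\sum_{\substack{p\le X \\ p\equiv v\,(4)}}\!\chi_p(n)
\end{equation*}
carry all the error. I would first truncate at $n \le X^M$, the tail being bounded by $O(X^{|\alpha|}\log^2 X / X^{M/2})$ via \eqref{abound2} and $|\chi_p(n)| \le 1$. In the truncated range the even-power terms $n=q^{2k}$ (with $q\ne p$, giving $\chi_p(q^{2k})=1$) produce a piece proportional to $\pi(X;4,v)$ that ultimately folds back into the main term, while the genuinely oscillating odd-power terms carry $\chi_p(q^{2k+1}) = \chi_p(q) = (q|p)$, which by quadratic reciprocity converts $\sum_p (q|p)$ into $\sum_p \chi^*(p)$ for an explicit non-principal real Dirichlet character $\chi^*$ of conductor essentially $n$; under GRH, Lemma~\ref{GRHbound} bounds this by $\ll \sqrt{X}\log(nX)$. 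Feeding this into $\sum_n \Lambda(n)|a(nX^{\pm\alpha})|\sqrt X \log(nX)$ and splitting the $n$-range according to the size of $nX^{\pm\alpha}$ (where the decay regime of $a$ changes) produces, after careful bookkeeping, the three remaining error terms $O(X^{|\alpha|/2-1/2}\log^2 X)$, $O(X^{(-1/4+\varepsilon)|\alpha|})$, and $O(X^{-1/2+(-3/2+\varepsilon)|\alpha|}\log^3 X)$. The principal technical obstacle is precisely this final accounting: extracting the correct power savings in $|\alpha|$ requires sharp estimates for $a(y)$ in three distinct regimes (small $y$, transitional $y \asymp 1$, and large $y$) and combining them with the GRH character sum bound in a way that preserves the relevant cancellation, with particular attention to the transition range $n \asymp X^{|\alpha|}$ which is the source of the second and third error terms.
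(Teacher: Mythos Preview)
Your overall strategy is correct and matches the paper's: explicit formula, split the $\Lambda$-sum into squares and non-squares, invoke Lemma~\ref{GRHbound} for the non-square (off-diagonal) terms, and use the GRH form of the prime number theorem in arithmetic progressions for the $\log p$ piece. The symmetric explicit formula you set up (integrals on $(c)$ and on $(1-c)$, combined via the functional equation) is a valid variant of the paper's version, which instead shifts the single integral from $(c)$ all the way to $\sigma=-\tfrac12$ and bounds the resulting integral there by $O(x^{-1/2})$; the paper's choice avoids carrying a second $\Lambda$-sum with weight $a(nX^{\alpha})$ that you would otherwise have to estimate separately.

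The gap is in your error accounting. You claim that all three of $O(X^{|\alpha|/2-1/2}\log^2 X)$, $O(X^{(-1/4+\varepsilon)|\alpha|})$, and $O(X^{-1/2+(-3/2+\varepsilon)|\alpha|}\log^3 X)$ arise from the off-diagonal sum by ``splitting the $n$-range according to the size of $nX^{\pm\alpha}$'', and you flag this splitting as the principal technical obstacle. In fact only the first of these (together with the tail term $O(X^{|\alpha|}\log^2 X/X^{M/2})$) comes from the off-diagonal: a single partial summation with Lemma~\ref{GRHbound} gives $A_2\ll x^{1/2}X^{1/2}\log X + xX\log X/X^{M/2}$, with no range-splitting and no regime analysis of $a(y)$ needed. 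The term $O(X^{(-1/4+\varepsilon)|\alpha|})$ comes from the \emph{diagonal} (square) contribution: one writes $\sum_{m:\,p\nmid m}a(m^2/x)\Lambda(m)$ as a contour integral involving $-\zeta'/\zeta(2s)$, picks up the pole at $s=\tfrac12$ (this is what produces the $-1$ in the main term), and shifts to $\sigma=\tfrac14+\varepsilon$, which is as far left as RH allows. The term $O(X^{-1/2+(-3/2+\varepsilon)|\alpha|}\log^3 X)$ comes from applying the prime number theorem to the $\log(p/2\pi)$ piece and bounding the resulting error $O(X^{1/2}\log^2 X)$ multiplied by $x^{-1/2}a(1/x)$, the latter via $a(y)\ll y^{-c}$ with $c=-1+\varepsilon$. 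Neither of these has anything to do with the off-diagonal $n$-sum, and in particular your proposed splitting cannot manufacture the exponent $(-\tfrac14+\varepsilon)|\alpha|$ (which carries no $X^{-1/2}$ factor) from a sum that is already multiplied by the GRH bound $\sqrt{X}\log X$. Once the errors are reassigned to their true sources, the off-diagonal step is much simpler than you anticipate, and the ``three regimes of $a(y)$'' difficulty evaporates.
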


\begin{cor}
Assume GRH and suppose that $K(s)$ is as given above Theorem~\ref{thm2}. If $0<|\alpha|<1$ is fixed, then

$$
F(\alpha,X) \ = \ -1 \ + \ o(1)
$$
as $X\rightarrow \infty$.
\end{cor}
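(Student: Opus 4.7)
The plan is to deduce the corollary directly from Theorem~\ref{thm1}. Fix $\alpha$ with $0<|\alpha|<1$. First I would verify that each of the four $O$-terms in Theorem~\ref{thm1} is $o(1)$ under this restriction: choose $M>2|\alpha|$ and $\varepsilon>0$ so small that both $(-\tfrac14+\varepsilon)|\alpha|<0$ and $(-\tfrac32+\varepsilon)|\alpha|<0$. Then $X^{|\alpha|/2-1/2}\log^2 X=o(1)$ because $|\alpha|<1$; the terms $X^{(-\frac14+\varepsilon)|\alpha|}$ and $X^{-\frac12+(-\frac32+\varepsilon)|\alpha|}\log^3 X$ are $o(1)$ by the choice of $\varepsilon$; and $X^{|\alpha|-M/2}\log^2 X=o(1)$ by the choice of $M$.

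It remains to show that the explicit second term in Theorem~\ref{thm1},
\[
\bigl(\tfrac12 K(\tfrac12)\bigr)^{-1}X^{-\alpha/2}a(X^{-\alpha})\cdot\frac{X-\mbox{Li}(X)\log 2\pi}{\mbox{Li}(X)},
\]
is also $o(1)$. From \eqref{Li} one has $X/\mbox{Li}(X)=\log X+O(1)$, so the last factor equals $\log X+O(1)$, while the prefactor $(\tfrac12 K(\tfrac12))^{-1}$ is a finite nonzero constant by assumption (iv). Hence the task reduces to proving $X^{-\alpha/2}a(X^{-\alpha})\log X=o(1)$.

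For this I would estimate $a(y)$ by shifting the contour in the defining integral \eqref{aadef}. The analyticity of $K$ on $-1<\sigma<2$ (assumption (i)), combined with the uniform decay in (v) and the integrability \eqref{Kbound2}, permits moving the line of integration to any fixed $c\in(-1,2)$ with no residue contribution, yielding $|a(y)|\ll_c y^{-c}$. If $0<\alpha<1$, take $c=-1+\eta$ for small $\eta>0$ to obtain $X^{-\alpha/2}a(X^{-\alpha})\ll X^{-3\alpha/2+\alpha\eta}$; if $-1<\alpha<0$, take $c=2-\eta$ to obtain $X^{-\alpha/2}a(X^{-\alpha})\ll X^{|\alpha|(-3/2+\eta)}$. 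Either way the exponent is strictly negative for $\eta$ small, so a single $\log X$ factor cannot destroy the polynomial decay, giving $X^{-\alpha/2}a(X^{-\alpha})\log X=o(1)$. Combined with the previous paragraph this yields $F(\alpha,X)=-1+o(1)$. The only step with any substance is the contour shift, but hypotheses (i), (iv), (v), and \eqref{Kbound2} were tailored precisely to make it routine, so there is no real obstacle beyond verifying the exponent arithmetic.
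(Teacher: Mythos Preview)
Your proof is correct and is exactly the approach intended by the paper: the corollary is an immediate consequence of Theorem~\ref{thm1}, and the only nontrivial ingredient is the bound $a(y)\ll_c y^{-c}$ for $-1<c<2$, which the paper records as Lemma~\ref{abound}. The single minor difference is that you justify this bound by a contour shift using properties (i), (v), and \eqref{Kbound2}, whereas the paper simply takes absolute values inside \eqref{aadef}, invoking the standing hypothesis that the integral defining $a(y)$ is absolutely convergent for every $c\in(-1,2)$; either justification works, and you could also have noted (as the paper does in the same lemma) that $\alpha\mapsto X^{-\alpha/2}a(X^{-\alpha})$ is even, reducing to the case $\alpha>0$.
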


We use Theorem~\ref{thm1} to prove the following nonvanishing result for $L(\frac{1}{2},\chi_p)$.
\begin{thm}\label{conditionalnonvanishing}
Assume GRH. Let $v=1$ or $3$. Then $L(\frac{1}{2},\chi_p)=0$ for at most $25\%$ of the primes $p\equiv v$~(mod $4$).
\end{thm}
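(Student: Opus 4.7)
The plan is to follow the method of \"{O}zl\"{u}k and Snyder: apply Theorem~\ref{thm2} to a non-negative test function and bound its left-hand side from below by the contribution of the central zeros. I would first choose a non-negative even continuous function $r(\alpha)$ with $r(0)>0$ whose Fourier transform $\hat{r}$ is supported in $[-1+\varepsilon,1-\varepsilon]$; the natural extremal candidate is the Fej\'{e}r kernel $r(\alpha) = \bigl(\sin(\pi(1-\varepsilon)\alpha)/(\pi(1-\varepsilon)\alpha)\bigr)^2$. For admissible kernels such as $K(s)=\exp((s-\tfrac{1}{2})^2)$, the values $K(\tfrac{1}{2}+it)$ are real and non-negative on the critical line, so every term in the double sum appearing in Theorem~\ref{thm2} is non-negative. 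Writing $M(X)$ for the number of primes $p\leq X$ with $p\equiv v\pmod 4$ and $L(\tfrac{1}{2},\chi_p)=0$, the central zero at $\rho=\tfrac{1}{2}$ contributes at least $K(\tfrac{1}{2})\,r(0)$ to the inner sum for each such prime, so
\begin{equation*}
K(\tfrac{1}{2})\,r(0)\,M(X) \;\leq\; \sum_{\substack{p\leq X \\ p\equiv v\,(\mbox{\scriptsize mod } 4)}}\sum_{\rho(p)} K(\rho)\,r\!\left(\frac{\gamma\log X}{2\pi}\right).
\end{equation*}

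Next I would apply Theorem~\ref{thm2} to replace the right-hand side by $\tfrac{1}{2}K(\tfrac{1}{2})\mbox{Li}(X)\int_{-\infty}^{\infty}\bigl(1-\sin(2\pi\alpha)/(2\pi\alpha)\bigr)r(\alpha)\,d\alpha + o(\mbox{Li}(X))$, and divide by $N(X)=\#\{p\leq X : p\equiv v\,(\mbox{\scriptsize mod } 4)\}\sim \mbox{Li}(X)/2$ to obtain the proportion bound
\begin{equation*}
\frac{M(X)}{N(X)} \;\leq\; \frac{\int_{-\infty}^{\infty}\bigl(1-\sin(2\pi\alpha)/(2\pi\alpha)\bigr)r(\alpha)\,d\alpha}{r(0)} + o(1).
\end{equation*}
A Parseval computation using the support condition $\mathrm{supp}(\hat r)\subset(-1,1)$ collapses $\int(\sin(2\pi\alpha)/(2\pi\alpha))\,r(\alpha)\,d\alpha$ to $r(0)/2$, so the right-hand ratio becomes $\int r(\alpha)\,d\alpha/r(0) - \tfrac{1}{2}$. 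Following the \"{O}zl\"{u}k--Snyder optimization---selecting the extremal admissible test function and passing to the limit $\varepsilon\to 0$---one recovers the sharp value $\tfrac{1}{4}$ for the limiting ratio, which yields $M(X)/N(X)\leq \tfrac{1}{4}+o(1)$ and hence Theorem~\ref{conditionalnonvanishing}.

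The main obstacle is the last optimization step. The Krein factorization $r=|\phi|^2$ with $\hat{\phi}$ supported in $[-(1-\varepsilon)/2,(1-\varepsilon)/2]$ forces $\int r(\alpha)\,d\alpha/r(0)\geq 1/(1-\varepsilon)$ by Cauchy--Schwarz, so a naive use of the Fej\'{e}r kernel produces only $\tfrac{1}{2}$ as the limiting ratio. The factor-of-two improvement to $\tfrac{1}{4}$---analogous to the passage from $\tfrac{1}{8}$ to $\tfrac{1}{16}$ underlying \"{O}zl\"{u}k--Snyder's $15/16$ bound for the $\chi_d$ family---requires the more delicate extremal construction that is the technical heart of this method, together with careful control of the $\varepsilon\to 0$ limit via the uniform error estimates afforded by Theorem~\ref{thm1}.
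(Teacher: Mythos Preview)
Your setup is correct and matches the paper's: the choice $K(s)=\exp((s-\tfrac12)^2)$, the Fej\'er kernel $r(u)=\bigl(\sin(\pi\lambda u)/(\pi\lambda u)\bigr)^2$, the positivity argument, and the reduction to bounding $\int r/r(0)-\tfrac12$ are all exactly what the paper does. You have also correctly diagnosed that the Fej\'er kernel (indeed any admissible $r$, by the very Cauchy--Schwarz/Krein argument you wrote down) can only give $\int r/r(0)\to 1$, hence a bound of $\tfrac12$ rather than $\tfrac14$.

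The gap is in your explanation of how the extra factor of $2$ arises. It does \emph{not} come from a ``more delicate extremal construction''; by your own observation, no admissible $r$ can do better than the Fej\'er kernel here. The missing idea---and the actual ``technical heart'' you are looking for---is that the root number of $L(s,\chi_p)$ is $+1$ for every prime $p$, so the functional equation forces the order of vanishing $m_p$ at $s=\tfrac12$ to be \emph{even}. Hence whenever $L(\tfrac12,\chi_p)=0$ one has $m_p\ge 2$, and the central zero contributes at least $2K(\tfrac12)r(0)$ (not $K(\tfrac12)r(0)$) to the inner sum. This immediately upgrades your inequality to
\[
\frac{M(X)}{N(X)}\ \le\ \frac12\left(\frac{\int r}{r(0)}-\frac12\right)+o(1)\ \longrightarrow\ \frac14
\]
as $\lambda\to 1^{-}$. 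The same parity trick is what turns $\tfrac18$ into $\tfrac{1}{16}$ in the \"Ozl\"uk--Snyder $\chi_d$ setting; there is no exotic test function involved in either case. Once you insert this observation, your proposal becomes the paper's proof.
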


In Section~\ref{ratiossection}, we follow the heuristic techniques of Conrey and Snaith~\cite{CS} and develop the following ratios conjecture for the family of quadratic Dirichlet $L$-functions $L(s,\chi_{p})$.

\begin{con}
\label{con:1}
Let $v=1$ or $3$. If $-\frac{1}{4}<\mathfrak{R}\alpha<\frac{1}{4}$, $ \frac{1}{\log X}\ll\mathfrak{R}\beta<\frac{1}{4}$, and $\mathfrak{I}\alpha,\mathfrak{I}\beta\ll_{\epsilon}X^{1-\epsilon}$ for every $\epsilon>0$, then

\begin{align*}
&\sum_{\substack{p\leq X \\ p\equiv v(\bmod4)}}\frac{L(\tfrac{1}{2}+\alpha,\chi_{p})}{ L(\tfrac{1}{2}+\beta,\chi_{p})} \\
&=\sum_{\substack{p\leq X \\ p\equiv v(\bmod4)}} \left(\frac{ \zeta(1+2\alpha)}{ \zeta(1+\alpha+\beta)} + \left(\frac{p}{\pi}\right)^{-\alpha}\frac{ \Gamma(1/4-\alpha/2)}{\Gamma(1/4+\alpha/2)} \frac{\zeta(1-2\alpha)}{ \zeta(1-\alpha+\beta)}\right)\nonumber \\
&\hspace{0.3in} +O(X^{1/2+\epsilon}).\nonumber
\end{align*}
\end{con}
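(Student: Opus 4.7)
The approach is to derive Conjecture~\ref{con:1} by following the ratios recipe of Conrey, Farmer and Zirnbauer~\cite{CFZ} and Conrey and Snaith~\cite{CS}, specialized to the symplectic family $\{L(s,\chi_{p}): p \le X,\ p \equiv v\,(\bmod\,4)\}$. The two inputs of the recipe are an approximate functional equation for the numerator $L(\tfrac{1}{2}+\alpha,\chi_{p})$ and a M\"obius-inverted Dirichlet series for the reciprocal $1/L(\tfrac{1}{2}+\beta,\chi_{p})$; the recipe then dictates how to average and which terms to retain.

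First I would record the functional equation $L(s,\chi_{p})=X_{p}(s)L(1-s,\chi_{p})$ with $X_{p}(s)=(p/\pi)^{1/2-s}\Gamma((1-s)/2)/\Gamma(s/2)$ in the even case $p\equiv 1\,(\bmod\,4)$ (and the analogous expression, with $\Gamma((2-s)/2)/\Gamma((1+s)/2)$, when $p\equiv 3\,(\bmod\,4)$). The recipe then replaces $L(\tfrac{1}{2}+\alpha,\chi_{p})$ with the formal expression
\begin{equation*}
\sum_{m\ge 1}\frac{\chi_{p}(m)}{m^{1/2+\alpha}} \ + \ X_{p}(\tfrac{1}{2}+\alpha)\sum_{m\ge 1}\frac{\chi_{p}(m)}{m^{1/2-\alpha}},
\end{equation*}
and combines it with $1/L(\tfrac{1}{2}+\beta,\chi_{p})=\sum_{n\ge 1}\mu(n)\chi_{p}(n)/n^{1/2+\beta}$. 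Multiplying out yields two double sums in $(m,n)$ weighted by $\chi_{p}(mn)$, the second carrying the factor $X_{p}(\tfrac{1}{2}+\alpha)$.

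The central step is to exchange the outer sum over $p$ with the $(m,n)$-sums and to invoke the recipe's symplectic averaging rule: the average of $\chi_{p}(mn)$ over the family is replaced by the indicator $\mathbf{1}_{mn=\square}$. For the first piece this produces the arithmetic sum
\begin{equation*}
\sum_{mn=\square}\frac{\mu(n)}{m^{1/2+\alpha}n^{1/2+\beta}},
\end{equation*}
which I would evaluate as an Euler product: at each prime $q$ the local factor works out to $(1-q^{-1-\alpha-\beta})/(1-q^{-1-2\alpha})$, so the full sum is $\zeta(1+2\alpha)/\zeta(1+\alpha+\beta)$. The second piece, by the same computation with $\alpha$ replaced by $-\alpha$, contributes $\zeta(1-2\alpha)/\zeta(1-\alpha+\beta)$ times $X_{p}(\tfrac{1}{2}+\alpha)$, which unpacks to $(p/\pi)^{-\alpha}\Gamma(1/4-\alpha/2)/\Gamma(1/4+\alpha/2)$ in the even case. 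Restoring the outer sum over $p\le X$ with $p\equiv v\,(\bmod\,4)$ delivers the conjectured main term.

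The claimed error $O(X^{1/2+\varepsilon})$ is the ``square-root of the family size'' bound predicted by the recipe, reflecting the cancellation expected in the discarded off-diagonal pairs with $mn$ non-square. I expect the principal obstacle to be precisely the justification of that discarding step: the recipe replaces a genuine character average $\frac{1}{\pi(X;4,v)}\sum_{p} \chi_{p}(mn)$ by its diagonal piece without rigorous error control, and converting this into a theorem would demand power-saving bounds on character sums over primes uniformly in $(m,n)$ that lie well beyond current technology even on GRH (compare the restricted support $[-1+\varepsilon,1-\varepsilon]$ in Theorem~\ref{thm2}, which already pushes the off-diagonal estimate of Lemma~\ref{GRHbound} to its limit). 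For this reason the statement is offered as a conjecture rather than a theorem.
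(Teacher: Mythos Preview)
Your derivation is correct and follows essentially the same route as the paper: replace the numerator by its approximate functional equation, the denominator by its M\"obius series, retain only the diagonal terms $mn=\Box$ under the symplectic averaging rule, and evaluate the resulting Euler product as $\zeta(1+2\alpha)/\zeta(1+\alpha+\beta)$ (with the dual piece obtained via $\alpha\mapsto-\alpha$ and the functional-equation factor). Your remark that the $p\equiv 3\,(\bmod\,4)$ case should carry $\Gamma((2-s)/2)/\Gamma((1+s)/2)$ rather than the even gamma factor is a refinement the paper does not record in the stated conjecture.
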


Conjecture~\ref{con:1} implies the following formula for the average one-level density of $L(s,\chi_{p})$, complete with lower order terms.

\begin{thm}
\label{thm:3.1}
Assume Conjecture \ref{con:1}. Let $f(z)$ be an even function that is holomorphic throughout the strip $|\mathfrak{I}z|<2$, is real on the real line, and satisfies the bound $f(x)\ll1/(1+x^{2})$ for all real $x$. Then

\begin{align*}
&\sum_{\substack{p\leq X \\ p\equiv v(\bmod4)}}\sum_{\rho (p)}f(\gamma) = \frac{1}{2\pi}\int_{-\infty}^{\infty}f(t) \Bigg(\log\frac{p}{\pi}+\frac{1}{2} \frac{\Gamma^{'}}{\Gamma}(1/4+it/2)\nonumber \\
&+\frac{1}{2} \frac{\Gamma^{'}}{\Gamma}(1/4-it/2)+2\Bigg(\frac{\zeta^{'}(1+2it)}{\zeta(1+2it)}-\left(\frac{p}{\pi}\right)^{-it} \frac{\Gamma(1/4-it/2)}{\Gamma(1/4+it/2)}\zeta(1-2it)\Bigg)\Bigg)dt\\
& \hspace{0.3in} +O(X^{1/2+\epsilon}).
\end{align*}
Here, the sum $\sum_{\rho(p)}$ is over the nontrivial zeros $\frac{1}{2}+i\gamma$ of $L(s,\chi_p)$.
\end{thm}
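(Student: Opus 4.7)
The plan is to adapt the standard Conrey--Snaith derivation~\cite{CS} of one-level densities from ratios conjectures. For each prime $p\equiv v\pmod 4$, the argument principle gives
$$\sum_{\rho(p)} f(\gamma) \ = \ \frac{1}{2\pi i}\left(\int_{(c)} \, - \int_{(1-c)}\right) f\!\left(-i(s-\tfrac{1}{2})\right) \frac{L'}{L}(s,\chi_{p})\,ds$$
for any $c>\tfrac{1}{2}$, the horizontal contributions at height $\pm T$ vanishing as $T\to\infty$ by the decay of $f$. I would then invoke the completed functional equation
$$L(s,\chi_{p}) \ = \ (p/\pi)^{1/2-s}\,\frac{\Gamma((1-s+a)/2)}{\Gamma((s+a)/2)}\,L(1-s,\chi_{p})$$
(with the appropriate gamma factor according to the parity of $\chi_p$), substitute $s\mapsto 1-s$ in the second integral, and use the evenness of $f$ to collapse everything into a single integral over $(c)$ whose integrand is $\log(p/\pi)+\tfrac{1}{2}\Gamma'/\Gamma((s+a)/2)+\tfrac{1}{2}\Gamma'/\Gamma((1-s+a)/2)+2\frac{L'}{L}(s,\chi_{p})$.

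After summing over $p\le X$ with $p\equiv v\pmod 4$, the task reduces to evaluating $\sum_{p}\frac{L'}{L}(s,\chi_{p})$ on $\mathfrak{R}s=c$. I would extract this from Conjecture~\ref{con:1} by applying $\partial/\partial\alpha$ and specializing $\alpha=\beta=r$ with $r:=s-\tfrac{1}{2}$. The first ratio yields $\zeta'(1+2r)/\zeta(1+2r)$ by a short quotient-rule computation. The second ratio requires more care: since $\zeta(1-\alpha+\beta)$ has a simple pole at $\alpha=\beta$, setting $\epsilon=\alpha-\beta$ and Laurent-expanding shows that only the derivative hitting this $\zeta$ in the denominator survives the $\epsilon\to 0$ limit, producing
$$-(p/\pi)^{-r}\,\frac{\Gamma(1/4-r/2)}{\Gamma(1/4+r/2)}\,\zeta(1-2r).$$
These two summands individually have simple poles at $r=0$, but their principal parts $\mp 1/(2r)$ cancel, so the combination continues analytically across the critical line.

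With this formula in hand I would push the contour from $(c)$ down to $\mathfrak{R}s=\tfrac{1}{2}$, which is permissible precisely because of the cancellation just noted (the $\tfrac{1}{2}\Gamma'/\Gamma$ factors are holomorphic in the strip crossed). Parametrizing the final contour by $s=\tfrac{1}{2}+it$ converts $\frac{1}{2\pi i}\,ds$ into $\frac{1}{2\pi}\,dt$ and $f(-i(s-\tfrac{1}{2}))$ into $f(t)$, producing exactly the integrand displayed in Theorem~\ref{thm:3.1}.

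The main obstacle is the Laurent-expansion bookkeeping in the middle step: $\partial_\alpha$ of the secondary term of the ratios conjecture is a formally singular quotient at $\alpha=\beta$, and one must verify both its finite limit and the analyticity of the full integrand in a neighborhood of the critical line so that the contour shift is justified. The error $O(X^{1/2+\epsilon})$ is inherited from Conjecture~\ref{con:1}: a Cauchy integral on a small circle in $\alpha$ around the diagonal transfers the conjectured remainder to $\partial_\alpha$ with no loss in the exponent (since the radius may be taken to be a small absolute constant), and the $t$-integration converges by the hypothesis $f(x)\ll 1/(1+x^{2})$.
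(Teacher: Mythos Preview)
Your approach is essentially the paper's: argument principle, functional equation on the $(1-c)$ line to reduce to a single integral with the $\log(p/\pi)$ and digamma terms plus $2L'/L$, differentiation of Conjecture~\ref{con:1} at $\alpha=\beta=r$ to evaluate $\sum_p L'/L$, then contour shift to $\Re s=\tfrac12$ using the pole cancellation you describe. One technical point you skip but the paper makes explicit: Conjecture~\ref{con:1} carries the restriction $|\Im r|\ll_\epsilon X^{1-\epsilon}$, so before invoking it you must truncate the $t$-integral to $|t|\le X^{1-\epsilon}$ and bound the tail directly via the GRH estimate $L'/L(s,\chi_p)\ll\log^2(|s|p)$ together with $f(x)\ll 1/(1+x^2)$; this costs only $O(X^\epsilon)$, and afterwards the range may be re-extended to $\mathbb{R}$ since the main term is $\ll X^{1+\epsilon}$ on the truncated range.
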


The following corollary of the above theorem states that if the ratios conjecture holds, then the Density Conjecture is true for this family of $L$-functions. That is, the analogue of \eqref{DensityConjecture} holds for any test function $f$, without restrictions on the support of $\hat{f}$. In other words, the ratios conjecture implies Katz-Sarnak density conjecture.

\begin{cor}
\label{thm:3.2}
Assume Conjecture \ref{con:1}. Let $f(z)$ be an even function that is holomorphic throughout the strip $|\mathfrak{I}z|<2$, is real on the real line, and satisfies the bound $f(x)\ll1/(1+x^{2})$ for all real $x$. Then

\begin{equation*}
\lim_{X\rightarrow\infty}\left( \frac{\mbox{Li}(X)}{2}\right)^{-1}\sum_{\substack{p\leq X \\ p\equiv v(\bmod4)}}\sum_{\rho (p)}f\left( \frac{\gamma\log X}{2\pi}\right)= \int_{-\infty}^{\infty}f(x)\left(1-\frac{\sin(2\pi x)}{2\pi x}\right)dx.
\end{equation*}
\end{cor}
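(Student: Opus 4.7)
The plan is to apply Theorem~\ref{thm:3.1} with the rescaled test function $f_X(z) := f(z\log X/(2\pi))$, perform the substitution $u = t\log X/(2\pi)$ in the resulting integral, and then let $X\to\infty$. After substitution, the identity of Theorem~\ref{thm:3.1} becomes
\begin{equation*}
\sum_{\substack{p\leq X\\p\equiv v(\bmod4)}}\sum_{\rho(p)} f\!\left(\frac{\gamma\log X}{2\pi}\right) = \frac{1}{\log X}\int_{-\infty}^\infty f(u)\sum_{\substack{p\leq X\\p\equiv v(\bmod4)}}\mathcal{K}_{p,X}(u)\,du + O(X^{1/2+\epsilon}),
\end{equation*}
where $\mathcal{K}_{p,X}(u)$ denotes the parenthesized integrand in Theorem~\ref{thm:3.1} evaluated at $t = 2\pi u/\log X$. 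Since we will divide by $\mbox{Li}(X)/2 \sim X/(2\log X)$, the remainder $O(X^{1/2+\epsilon})$ contributes $o(1)$, and the argument reduces to the asymptotic analysis of the inner sum of kernels, uniformly in $u$ over the effective support of $f$.

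Next, I would decompose $\mathcal{K}_{p,X}$ into its four natural constituents and evaluate each prime-sum using the prime number theorem in arithmetic progressions. Writing $\pi_v(X)$ and $\theta_v(X)$ for the restricted counting and Chebyshev functions, and setting $t_u := 2\pi u/\log X$, the contributions work out as follows: (i) $\sum_p \log(p/\pi) = \theta_v(X) - \pi_v(X)\log\pi = \pi_v(X)\log X + O(\pi_v(X))$; (ii) the Gamma--digamma piece is uniformly bounded on compacta in $u$ and contributes $O(\pi_v(X))$; (iii) from the Laurent expansion $\zeta'/\zeta(1+2it_u) = -\log X/(4\pi i u) + \gamma_E + O(1/\log X)$, piece (iii) sums to $-\pi_v(X)\log X/(2\pi i u) + O(\pi_v(X))$; (iv) for the oscillatory piece, partial summation combined with PNT in AP yields $\sum_{p}(p/\pi)^{-it_u} = \pi_v(X)\,e^{-2\pi i u}\bigl(1 + O(1/\log X)\bigr)$, which, together with $\zeta(1-2it_u) = -\log X/(4\pi i u) + O(1)$ and the Gamma ratio equal to $1 + O(1/\log X)$, produces $+\pi_v(X)\log X\,e^{-2\pi i u}/(2\pi i u) + O(\pi_v(X))$. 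The individually singular parts of (iii) and (iv) at $u=0$ cancel, leaving the smooth expression $\pi_v(X)\log X(e^{-2\pi i u}-1)/(2\pi i u)$.

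Adding (i)--(iv) gives $\sum_p \mathcal{K}_{p,X}(u) = \pi_v(X)\log X\bigl[1 + (e^{-2\pi i u}-1)/(2\pi i u)\bigr] + O(\pi_v(X))$. The final step is to divide by $\log X$, integrate against $f(u)$, and invoke the identity $(e^{-2\pi i u}-1)/(2\pi i u) = -\int_0^1 e^{-2\pi i u y}\,dy$ together with Fubini and the evenness of $\widehat{f}$ to extract $\pi_v(X)\bigl[\widehat{f}(0) - \tfrac{1}{2}\int_{-1}^1 \widehat{f}(y)\,dy\bigr]$; by Plancherel, using $\widehat{\sin(2\pi x)/(2\pi x)}(y) = \tfrac{1}{2}\mathbf{1}_{[-1,1]}(y)$, this equals $\pi_v(X)\int_{-\infty}^\infty f(x)\bigl(1-\sin(2\pi x)/(2\pi x)\bigr)dx$. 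Division by $\mbox{Li}(X)/2$ together with $\pi_v(X)\sim \mbox{Li}(X)/2$ then produces the claimed limit. The main technical obstacle will be securing the pole cancellation between pieces (iii) and (iv) at $u=0$ with error bounds uniform in $u$; under GRH, the sharp remainder in PNT for AP and uniformity of the partial summation in the phase parameter $t_u$ are sufficient to deliver the required $O(\pi_v(X))$ bound.
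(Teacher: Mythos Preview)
Your proposal is correct and follows essentially the same route as the paper: apply Theorem~\ref{thm:3.1} to the rescaled test function, change variables $u=t\log X/(2\pi)$, and identify the three surviving contributions (the $\log(p/\pi)$ term, the $\zeta'/\zeta$ pole, and the oscillatory $\zeta(1-2it)$ piece) that combine to $\pi_v(X)\log X\bigl[1+(e^{-2\pi iu}-1)/(2\pi iu)\bigr]$. The only cosmetic difference is the last step: the paper uses the evenness of $f$ to drop the odd term $-1/(2\pi iu)$ and symmetrize $e^{-2\pi iu}/(2\pi iu)$ into $-\sin(2\pi u)/(2\pi u)$ directly inside the integral, whereas you pass to the Fourier side via $(e^{-2\pi iu}-1)/(2\pi iu)=-\int_0^1 e^{-2\pi iuy}\,dy$ and Plancherel; these are equivalent one-line manipulations.
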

Corollary~\ref{thm:3.2} is formally consistent with Theorem~\ref{thm2} if we take the function $f(z)$ to be $f(z)=K(\frac{1}{2})^{-1}K(\frac{1}{2}+2\pi iz/\log X)r(z)$. However, Theorem~\ref{thm2} assumes that the support of $\hat{r}(\alpha)$ is contained in $(-1,1)$, while no such restriction is needed for the function $f$ in Corollary~\ref{thm:3.2}.

\section{The Explicit Formula}

As in \"{O}zl\"{u}k and Snyder~\cite{ozluksnyder1}, the starting point of our proof of Theorem ~\ref{thm1} is an explicit formula involving the sum

\begin{equation*}
-\sum_{n=1}^{\infty} a\left( \frac{n}{x}\right) \Lambda(n) \chi_p(n),
\end{equation*}
where $x\geq 1$, $\chi_p(n)$ is the Legendre symbol $(n|p)$, $\Lambda(n)$ is the von Mangoldt function, and $a(y)$ is defined by \eqref{aadef}. To deduce an explicit formula, we insert the definition \eqref{aadef} with $c>1$ for $a(n/x)$ into the above sum. We interchange the order of summation and arrive at

\begin{equation*}
-\sum_{n=1}^{\infty} a\left( \frac{n}{x}\right) \Lambda(n) \chi_p(n) \ = \ \frac{1}{2\pi i} \int_{(c)} K(s)x^s \, \frac{L'}{L} (s,\chi_p)\,ds,
\end{equation*}
where $c>1$. It is a well-known fact that if $\chi$ is a primitive character mod $q$, then $(L'/L)(s,\chi) \ll \log^2(q|t|)$ uniformly for $-1\leq \sigma\leq 2$ along a sequence of values of $t$ going to infinity (see for example \S~19 of Davenport~\cite{Davenport}). By this fact and property (v) of $K(s)$ stated above Theorem~\ref{thm2}, we may use the residue theorem to move the line of integration to $\sigma=-\frac{1}{2}$, say. This leaves the residues $\sum_{\rho (p)}K(\rho) x^{\rho}$, where $\rho$ runs through all the nontrivial zeros of $L(s,\chi_p)$, and the possible residue $K(0)$ if $p\equiv 1$ (mod~$4$). We bound the resulting integral along $\sigma=-\frac{1}{2}$ by applying the functional equation for $L(s,\chi_p)$, Stirling's formula, and \eqref{Kbound2}. The result, after rearranging, is the explicit formula

\begin{align}\label{explicitformula}
\sum_{\rho(p)} K(\rho) x^{i\gamma}\ &= \ -x^{-1/2}\sum_{n=1}^{\infty} a\left( \frac{n}{x}\right) \Lambda(n) \chi_p(n)\nonumber \\
& \qquad + \ x^{-1/2}a\left( \frac{1}{x}\right)\log \frac{p}{2\pi} \ + \ O\left(x^{-{1/2}}\right)
\end{align}
for $x\geq 1$. Here, the implied constant depends only on $K(s)$.

To estimate $F(\alpha,X)$, we define $F_1(x,X)$ by

\begin{equation}\label{F1def}
F_1(x,X) \ = \ \sum_{\begin{subarray}{c} p\leq X \\ p\equiv v (\mbox{\scriptsize mod } 4)\end{subarray}} \sum_{\rho(p)} K(\rho) x^{i\gamma},
\end{equation}
for $x\geq 1$ and $X\geq 2$. We insert the explicit formula \eqref{explicitformula} into this definition to deduce that

\begin{equation}\label{F1def2}
\begin{split}
F_1(x,X) \ = \ A \ + \ B \ + \ C,
\end{split}
\end{equation}
where, since $\chi_p(n)=(n|p)$,

\begin{equation}\label{Adef}
\begin{split}
A \ = \ -x^{-1/2}\sum_{\begin{subarray}{c} p\leq X \\ p\equiv v (\mbox{\scriptsize mod } 4)\end{subarray}}\sum_{n=1}^{\infty} a\left( \frac{n}{x}\right) \Lambda(n) \left( \frac{n}{p}\right) ,
\end{split}
\end{equation}

\begin{equation}\label{Bdef}
\begin{split}
B \ = \ x^{-1/2}a\left( \frac{1}{x}\right)\sum_{\begin{subarray}{c} p\leq X \\ p\equiv v (\mbox{\scriptsize mod } 4)\end{subarray}} \log\frac{p}{2\pi} ,
\end{split}
\end{equation}
and

\begin{equation}\label{Cdef}
\begin{split}
C \ \ll \ x^{-1/2}\sum_{\begin{subarray}{c} p\leq X \\ p\equiv v (\mbox{\scriptsize mod } 4)\end{subarray}} 1 \
\end{split}
\end{equation}
for $x\geq 1$ and $X\geq 2$, say.

To estimate $A$, we split the $n$-sum in \eqref{Adef} into squares and non-squares to deduce that

\begin{equation}\label{Adef2}
\begin{split}
A \ = \ A_1 \ + \ A_2,
\end{split}
\end{equation}
where

\begin{equation}\label{A1def}
\begin{split}
A_1 \ = \ -x^{-1/2}\sum_{\begin{subarray}{c} p\leq X \\ p\equiv v (\mbox{\scriptsize mod } 4)\end{subarray}}\sum_{\begin{subarray}{c} n=1 \\ n=\Box \end{subarray}}^{\infty} a\left( \frac{n}{x}\right) \Lambda(n) \left( \frac{n}{p}\right)
\end{split}
\end{equation}
and

\begin{equation}\label{A2def}
\begin{split}
A_2 \ = \ -x^{-1/2}\sum_{\begin{subarray}{c} p\leq X \\ p\equiv v (\mbox{\scriptsize mod } 4)\end{subarray}}\sum_{\begin{subarray}{c} n=1 \\ n\neq\Box \end{subarray}}^{\infty} a\left( \frac{n}{x}\right) \Lambda(n) \left( \frac{n}{p}\right).
\end{split}
\end{equation}
Here, we write $n=\Box$ to indicate that the sum is over perfect squares $n$, and we use $n\neq \Box$ to denote a sum that is over integers that are not perfect squares.

\section{The Diagonal Terms}
We first consider the $A_1$, i.e., equation \eqref{A1def}. If $n$ is a square, then $(n|p)=1$ if $p\nmid n$ and $(n|p)=0$ if $p| n$. Therefore we may write

\begin{equation*}
\begin{split}
A_1 \ &= \ -x^{-1/2}\sum_{\begin{subarray}{c} p\leq X \\ p\equiv v (\mbox{\scriptsize mod } 4)\end{subarray}}\sum_{\begin{subarray}{c} n=1 \\ n=\Box \\ p\nmid n \end{subarray}}^{\infty} a\left( \frac{n}{x}\right) \Lambda(n) \\
& \ = \ -x^{-1/2}\sum_{\begin{subarray}{c} p\leq X \\ p\equiv v (\mbox{\scriptsize mod } 4)\end{subarray}}\sum_{\begin{subarray}{c} m=1 \\ p\nmid m \end{subarray}}^{\infty} a\left( \frac{m^2}{x}\right) \Lambda(m)
\end{split}
\end{equation*}
because $\Lambda(m^2)=\Lambda(m)$ for all positive integers $m$. We insert into this the definition \eqref{aadef} with $c>1$ and interchange the order of summation to deduce that

\begin{equation*}
\begin{split}
A_1 \ = \ -x^{-1/2}\sum_{\begin{subarray}{c} p\leq X \\ p\equiv v (\mbox{\scriptsize mod } 4)\end{subarray}} \ \frac{1}{2\pi i} \int_{(c)} K(s)x^s \, \left( -\frac{\zeta'}{\zeta} (2s) \ - \ \frac{\log p}{p^{2s}-1}\right)\,ds.
\end{split}
\end{equation*}
It is a well-known fact that $(\zeta'/\zeta)(s) \ll \log^2|t|$ uniformly for $-1\leq \sigma\leq 2$ along a sequence of values of $t$ going to infinity (see for example \S~17 of Davenport~\cite{Davenport}). It follows from this and property (v) of $K(s)$, stated above Theorem~\ref{thm2}, that we may apply the residue theorem and move the line of integration to $\sigma=\frac{1}{4}+\varepsilon$, leaving a residue from the pole of $\frac{\zeta'}{\zeta} (2s)$. To bound the resulting integral along $\sigma=\frac{1}{4}+\varepsilon$, we use the assumption \eqref{Kbound2}, the bound $(\zeta'/\zeta)(2s) \ll \log|t|$ for $\sigma=\frac{1}{4}+\varepsilon$ (see for example \S~15 of Davenport~\cite{Davenport}), and the fact that $\frac{\log p}{p^{2s}-1}\ll 1$ for $\sigma\geq \frac{1}{4}+\varepsilon$ and arrive at

\begin{equation*}
\begin{split}
A_1 \ = \ -x^{-1/2}\sum_{\begin{subarray}{c} p\leq X \\ p\equiv v (\mbox{\scriptsize mod } 4)\end{subarray}} \left( \frac{1}{2}K\left( \frac{1}{2}\right) x^{1/2} \ + \ O\left( x^{\frac{1}{4}+\varepsilon}\right) \right)
\end{split}
\end{equation*}
for $x\geq 1$, with implied constant depending only on $K(s)$. This simplifies to

\begin{equation}\label{A11def2}
\begin{split}
A_1 \ = \ -\pi(X;4,v)\, \left( \frac{1}{2}K\left( \frac{1}{2}\right)  \ + \ O\left( x^{-\frac{1}{4}+\varepsilon}\right) \right)
\end{split}
\end{equation}
for $x\geq 1$, where $\pi(X;4,1)$ denotes the number of primes $p\leq X$ that satisfy $p\equiv v$ (mod $4$). On GRH, the prime number theorem for arithmetic progressions implies (see, for example, \S~20 of Davenport~\cite{Davenport})

\begin{equation*}
\begin{split}
\pi(X;4,v) \ = \ \frac{1}{2}\mbox{Li}(X) \ + \ O\left( X^{1/2}\log X\right).
\end{split}
\end{equation*}
It follows from this and \eqref{A11def2} that

\begin{equation}\label{A1def3}
\begin{split}
A_1 \ = \ -\frac{1}{4}K\left( \frac{1}{2}\right)\mbox{Li}(X) \ + \ O\left( X^{1/2}\log X\right) \ + \ O\left( x^{-\frac{1}{4}+\varepsilon} \,\mbox{Li}(X)  \right)
\end{split}
\end{equation}
for $x\geq 1$ and $X\geq 2$, with implied constant depending only on $K(s)$.

\section{The Off-diagonal Terms}\label{offdiag}
To estimate $A_2$ defined by \eqref{A2def}, we first prove

\begin{lem}\label{abound}
Let $K(s)$ be a function having the properties stated above as in Theorem~\ref{thm2}, and let $a(y)$ be defined by \eqref{aadef}. If $-1<c<2$, then

$$
a(y) \ \ll \ y^{-c}
$$
for all $y>0$, where the implied constant depends only on $K(s)$ and $c$. Moreover, if $X>0$ then the function $\alpha\mapsto X^{-\alpha/2} a(X^{-\alpha})$ is an even function of the real variable $\alpha$.
\end{lem}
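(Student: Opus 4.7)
The plan is to prove both assertions by working directly with the integral representation \eqref{aadef} of $a(y)$ and exploiting, respectively, just the size of $|K(c+it)|$ on the contour and the symmetry assumption (ii).

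For the bound, I would simply parametrize the contour by $s=c+it$. Since $|y^{-s}|=y^{-c}$ on the line $\Re(s)=c$, this gives
$$|a(y)| \;\leq\; \frac{y^{-c}}{2\pi}\int_{-\infty}^{\infty}|K(c+it)|\,dt.$$
The hypothesis \eqref{Kbound2} guarantees that the last integral is finite for every $c\in(-1,2)$ (indeed, it is finite even when weighted by an extra $\log(|t|+2)$). This yields $a(y)\ll y^{-c}$ with an implied constant depending only on $K$ and $c$, as required.

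For the evenness, I would first shift the line of integration in \eqref{aadef} to $\Re(s)=\tfrac12$. This is legitimate because $K(s)$ is analytic on $-1<\sigma<2$ by property (i), and the horizontal arcs of a rectangular contour vanish in the limit thanks to the rapid decay of $K(\tfrac12+it)$ given by (iii) together with (v). Writing $s=\tfrac12+it$ and pulling out the factor $X^{\alpha/2}$ coming from $X^{-\alpha s}$ at $\sigma=\tfrac12$, I obtain
$$X^{-\alpha/2}a(X^{-\alpha}) \;=\; \frac{1}{2\pi}\int_{-\infty}^{\infty} K\!\left(\tfrac{1}{2}+it\right) X^{i\alpha t}\,dt.$$
The key observation is then that making the substitution $t\mapsto -t$ on the right-hand side and applying the symmetry (ii), $K(\tfrac12+it)=K(\tfrac12-it)$, converts this integral into the same expression with $\alpha$ replaced by $-\alpha$. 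Hence $\alpha\mapsto X^{-\alpha/2}a(X^{-\alpha})$ is even.

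There is no real obstacle in either part; the only step requiring a little care is verifying that the contour shift to $\Re(s)=\tfrac12$ is permitted, but this is immediate from analyticity in the strip and the decay along horizontal lines supplied by the hypotheses on $K$. In short, the proof should be a short, essentially mechanical calculation.
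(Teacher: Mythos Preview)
Your proof is correct and follows essentially the same route as the paper: bounding $|a(y)|$ by taking absolute values inside \eqref{aadef}, and then using $c=\tfrac12$ together with property~(ii) and the substitution $t\mapsto -t$ to obtain the symmetry (the paper phrases the latter as the equivalent functional relation $a(1/y)=y\,a(y)$). Your discussion of a contour shift is unnecessary, since the paper simply assumes \eqref{aadef} converges absolutely for every $c\in(-1,2)$ and takes $c=\tfrac12$ directly, but it does no harm.
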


\begin{proof}
The first assertion follows immediately from putting absolute values inside the integral in the definition \eqref{aadef} of $a(y)$, since it converges absolutely by assumption.
To prove the second assertion, observe that the definition \eqref{aadef} of $a(y)$ with $c=\frac{1}{2}$ and property (ii) of $K(s)$ imply

\begin{equation*}
\begin{split}
a\left( \frac{1}{y}\right) \ &= \ \frac{1}{2\pi } \int_{-\infty}^{\infty} K(\tfrac{1}{2}+it) y^{\frac{1}{2}+it}\,dt \\
& \ = \ \frac{1}{2\pi } \int_{-\infty}^{\infty} K(\tfrac{1}{2}-iu) y^{\frac{1}{2}-iu}\,du \ = \ ya(y).
\end{split}
\end{equation*}
by a change of variable $u=-t$.
\end{proof}

The main tool we will use to estimate the contribution $A_2$ of the off-diagonal terms is the following consequence of GRH.

\begin{lem}\label{GRHbound}
Assume GRH, and let $v=1$ or $3$. If $M>0$ is fixed and $n\leq X^M$ is an integer that is not a perfect square, then

$$
\sum_{\begin{subarray}{c} p\leq X \\ p\equiv v (\mbox{\scriptsize mod } 4)\end{subarray}} \left( \frac{n}{p}\right) \ \ll \ X^{1/2}\log X
$$
for $X\geq2 $, with implied constant depending only on $M$.
\end{lem}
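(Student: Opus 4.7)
The plan is to reduce the restricted prime sum to two unrestricted sums of non-principal Dirichlet characters over primes, and then invoke the standard GRH bound. Let $\chi_1$ denote the non-principal character modulo $4$. For every odd prime $p$, orthogonality of characters modulo $4$ gives
\begin{equation*}
\mathbf{1}_{p\equiv v\,(\bmod\,4)} \ = \ \frac{1+\chi_1(v)\chi_1(p)}{2},
\end{equation*}
so that
\begin{equation*}
\sum_{\substack{p\leq X \\ p\equiv v\,(\bmod\,4)}}\left(\frac{n}{p}\right) \ = \ \frac{1}{2}\sum_{2<p\leq X}\left(\frac{n}{p}\right) \ + \ \frac{\chi_1(v)}{2}\sum_{2<p\leq X}\chi_1(p)\left(\frac{n}{p}\right) \ + \ O(1).
\end{equation*}
For odd $p$ coprime to $n$ the Legendre symbol $\left(\tfrac{n}{p}\right)$ is the value at $p$ of the Kronecker-symbol Dirichlet character $m\mapsto\left(\tfrac{n}{m}\right)$, whose modulus divides $4n$.

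Next I would check that both $\left(\tfrac{n}{\cdot}\right)$ and $\chi_1(\cdot)\left(\tfrac{n}{\cdot}\right)$ are non-principal. Writing $n=dk^{2}$ with $d>1$ squarefree -- possible because $n$ is a positive non-square -- we have $\left(\tfrac{n}{m}\right)=\left(\tfrac{d}{m}\right)$ for $\gcd(m,n)=1$, and the right-hand side is the primitive quadratic character attached to the real quadratic field $\mathbb{Q}(\sqrt{d})$. Hence $\left(\tfrac{n}{\cdot}\right)$ is non-principal and \emph{even} ($\left(\tfrac{n}{-1}\right)=1$ because $n>0$). Since $\chi_1$ is \emph{odd}, the product $\chi_1(\cdot)\left(\tfrac{n}{\cdot}\right)$ is an odd, hence non-principal, character. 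Both characters have modulus at most $4n\leq 4X^{M}$.

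Finally, for any non-principal Dirichlet character $\psi$ modulo $q$, GRH for $L(s,\psi)$ supplies the classical bound $\sum_{m\leq X}\Lambda(m)\psi(m)\ll X^{1/2}\log^{2}(qX)$ (see e.g.\ Davenport~\cite{Davenport}, \S20). Stripping the prime powers (which contribute $O(X^{1/2}\log X)$) and applying partial summation yields
\begin{equation*}
\sum_{p\leq X}\psi(p) \ \ll \ X^{1/2}\log(qX).
\end{equation*}
Applying this to the two sums above with $q\leq 4X^{M}$, so that $\log q\ll_{M}\log X$, gives the claimed bound $\ll_{M}X^{1/2}\log X$. The only non-routine point is the non-principality of $\chi_{1}(\cdot)\left(\tfrac{n}{\cdot}\right)$; the parity argument above dispatches it cleanly, avoiding any explicit computation of the conductor of the Kronecker-symbol character.
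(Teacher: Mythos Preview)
Your proof is correct and follows essentially the same route as the paper: decompose via characters modulo $4$, verify that the two resulting characters are non-principal, and apply the GRH bound $\sum_{m\leq X}\Lambda(m)\psi(m)\ll X^{1/2}\log^{2}(qX)$ followed by partial summation. The only cosmetic difference is that the paper writes the second character as $\left(\tfrac{-n}{\cdot}\right)$ rather than $\chi_{1}(\cdot)\left(\tfrac{n}{\cdot}\right)$ (these agree on odd primes) and argues non-principality by citing the fact that an integer which is a quadratic residue modulo every prime must be a perfect square, whereas your parity argument is a clean alternative.
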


\begin{proof}
Using characters to detect congruence modulo $4$, we may write the sum in question as

$$
\frac{1}{2}\sum_{5\leq p\leq X } \left( \left(\frac{1}{p}\right)\ \pm \ \left( \frac{-1}{p} \right) \right)\left( \frac{n}{p}\right) \ = \ \frac{1}{2}\sum_{5\leq p\leq X } \left( \frac{n}{p}\right) \ \pm \ \frac{1}{2}\sum_{5\leq p\leq X } \left(\frac{-n}{p}\right),
$$
where the symbol $\pm$ is plus if $v=1$ and minus if $v=3$. Thus it suffices to prove that

\begin{equation}\label{GRHbound1}
\sum_{5\leq p\leq X } \left( \frac{n}{p}\right) \ \ll_M \ X^{1/2}\log X
\end{equation}
for integers $n$ with $|n|\leq X^M$ that are not perfect squares. For such $n$, the character $\left(\frac{n}{\cdot }\right)$ modulo $n$ is non-principal since if a number $N$ is a square modulo every prime then $N$ must be a square (see, for example, Theorem~1 of Hall~\cite{Hall}). It follows from this and the explicit formula for Dirichlet characters (see \S~20 of Davenport~\cite{Davenport}; the estimates there hold uniformly for $q\leq x^M$ for any fixed $M$) that, on GRH,

$$
\sum_{\nu\leq X} \left( \frac{n}{\nu}\right) \Lambda(\nu) \ \ll_M \ X^{1/2}\log^2 X
$$
uniformly for $n\leq X^M$ and $X\geq 2$. The prime powers with exponents greater than $2$ contribute at most $O(X^{1/2})$, and we thus arrive at

$$
\sum_{5\leq p\leq X} \left( \frac{n}{p}\right) \log p \ \ll_M \ X^{1/2}\log^2 X.
$$
It now follows from this and partial summation that

\begin{equation*}
\begin{split}
\sum_{5\leq p\leq X} \left( \frac{n}{p}\right) \ = \ & \frac{1}{\log X} \sum_{5\leq p\leq X} \left( \frac{n}{p}\right) \log p \ + \ \int_4^X \left( \sum_{5\leq p\leq u} \left( \frac{n}{p}\right) \log p\right) \frac{du}{u\log^2 u} \\
\ll \ & X^{1/2}\log X \ +\ \int_4^X  u^{-1/2}\,du \ \ll \ X^{1/2}\log X.
\end{split}
\end{equation*}
This proves \eqref{GRHbound1} and thus Lemma~\ref{GRHbound}.
\end{proof}

We now bound $A_2$. Let $M>0$ be arbitrarily large, and split the inner sum in the definition \eqref{A2def} of $A_2$ into the sum of the terms with $n\leq X^M$ and the sum of the rest to write

\begin{equation}\label{A2def2}
\begin{split}
A_2 \ = \ A_{21} \ + \ A_{22},
\end{split}
\end{equation}
where

\begin{equation}\label{A21def}
\begin{split}
A_{21} \ = \ -x^{-1/2}\sum_{\begin{subarray}{c} n\leq X^M \\ n\neq\Box \end{subarray}} a\left( \frac{n}{x}\right) \Lambda(n)\sum_{\begin{subarray}{c} p\leq X \\ p\equiv v (\mbox{\scriptsize mod } 4)\end{subarray}} \left( \frac{n}{p}\right)
\end{split}
\end{equation}
and

\begin{equation}\label{A22def}
\begin{split}
A_{22} \ = \ -x^{-1/2}\sum_{\begin{subarray}{c} n> X^M \\ n\neq\Box \end{subarray}} a\left( \frac{n}{x}\right) \Lambda(n)\sum_{\begin{subarray}{c} p\leq X \\ p\equiv v (\mbox{\scriptsize mod } 4)\end{subarray}} \left( \frac{n}{p}\right).
\end{split}
\end{equation}
To estimate $A_{21}$, we let

\begin{equation}\label{lambdadef}
\lambda(n) \ = \ \Lambda(n)\sum_{\begin{subarray}{c} p\leq X \\ p\equiv v (\mbox{\scriptsize mod } 4)\end{subarray}} \left( \frac{n}{p}\right)
\end{equation}
and write \eqref{A21def} as

\begin{equation*}
\begin{split}
A_{21} \ = \ -x^{-1/2}\sum_{\begin{subarray}{c} n\leq X^M \\ n\neq\Box \end{subarray}} a\left( \frac{n}{x}\right) \lambda(n).
\end{split}
\end{equation*}
From this and partial summation, we arrive at

\begin{equation}\label{A21def2}
\begin{split}
A_{21} \ = \ -x^{-1/2} a\left( \frac{X^M}{x}\right)\sum_{\begin{subarray}{c} n\leq X^M \\ n\neq\Box \end{subarray}} \lambda(n) \ - \ x^{-1/2}\int_1^{X^M} \left( \sum_{\begin{subarray}{c} n\leq u \\ n\neq\Box \end{subarray}} \lambda(n)\right) \, a'\left( \frac{u}{x}\right) \, \frac{du}{x}.
\end{split}
\end{equation}
Observe that Lemma~\ref{GRHbound}, \eqref{lambdadef}, and the prime number theorem imply

\begin{equation*}
\sum_{\begin{subarray}{c} n\leq u \\ n\neq\Box \end{subarray}} \lambda(n) \ \ll \ X^{1/2}\log X \, \sum_{n\leq u} \Lambda(n) \ \ll \ uX^{1/2}\log X
\end{equation*}
for $u\geq 1$. It follows from this and \eqref{A21def2} that

\begin{equation*}
\begin{split}
A_{21} \ & \ll \ x^{-1/2} \left|a\left( \frac{X^M}{x}\right)\right|X^{M+\frac{1}{2}}\log X \\
& \qquad \ + \ x^{-1/2}X^{1/2}\log X \ \int_1^{X^M} u \, \left|a'\left( \frac{u}{x}\right)\right| \, \frac{du}{x}.
\end{split}
\end{equation*}
We apply Lemma~\ref{abound} with $c=\frac{3}{2}$, say, to bound the first term on the right-hand side, while we make a change of variable and use \eqref{abound2} to bound the second term. The result is

\begin{equation}\label{A21def3}
\begin{split}
A_{21}
\ll \ & \frac{x X^{\frac{1}{2}}\log X}{X^{M/2}} \ +\ x^{\frac{1}{2}} X^{\frac{1}{2}}\log X.
\end{split}
\end{equation}
Next, to estimate $A_{22}$ defined by \eqref{A22def}, we apply the trivial bound for the $p$-sum and Lemma~\ref{abound} with $c=\frac{3}{2}$, say, to deduce that

\begin{equation*}
\begin{split}
A_{22} \ \ll \ x^{-1/2}X\sum_{n>X^M} \frac{x^{3/2}\log n}{n^{3/2}} \ \ll \ \frac{xX\log X}{X^{M/2}}.
\end{split}
\end{equation*}
From this, \eqref{A2def2}, and \eqref{A21def3}, we arrive at

\begin{equation}\label{A2def3}
\begin{split}
A_{2} \ \ll \ x^{\frac{1}{2}} X^{\frac{1}{2}}\log X \ + \  \frac{xX\log X}{X^{M/2}}
\end{split}
\end{equation}
uniformly for $x\geq 1$ and $X\geq 2$.

It now follows from \eqref{Adef2}, \eqref{A1def3}, and \eqref{A2def3} that

\begin{equation}\label{Adef3}
\begin{split}
A \ = \ & -\frac{1}{4}K\left( \frac{1}{2}\right)\mbox{Li}(X) \ + \ O\left( x^{\frac{1}{2}} X^{\frac{1}{2}}\log X\right) \\
& \ \ \ + \ O\left( x^{-\frac{1}{4}+\varepsilon} \,\mbox{Li}(X)  \right) \ + \ O\left( \frac{xX\log X}{X^{M/2}} \right)
\end{split}
\end{equation}
uniformly for $x\geq 1$ and $X\geq 2$, where $M>0$ is a fixed arbitrarily large real number.

\section{The Asymptotic Formula}
Having estimated the term $A$ in \eqref{F1def2}, we now bound the terms $B$ and $C$, and finish the proof of Theorem~\ref{thm1}. To estimate $B$ defined by \eqref{Bdef}, we apply the prime number theorem for arithmetic progressions to deduce that

\begin{equation*}
\begin{split}
B \ = \ \frac{x^{-1/2}}{2}a\left( \frac{1}{x}\right)\left( X-\mbox{Li}(X)\log 2\pi\right) \ + \ O\left(x^{-1/2}\left|a\left( \frac{1}{x}\right) \right|X^{1/2}\log^2X\right).
\end{split}
\end{equation*}
From this and Lemma~\ref{abound} with $c=-1+\varepsilon$, it follows that

\begin{equation}\label{Bdef2}
\begin{split}
B \ = \ \frac{x^{-1/2}}{2}a\left( \frac{1}{x}\right)\left( X-\mbox{Li}(X)\log 2\pi\right) \ + \ O\left(x^{-\frac{3}{2}+\varepsilon}X^{\frac{1}{2}}\log^2X\right)
\end{split}
\end{equation}
uniformly for $x\geq 1$ and $X\geq 2$. Next, to bound $C$ defined by \eqref{Cdef}, observe that the prime number theorem again implies

\begin{equation*}
C \ \ll \ x^{-1/2}\mbox{Li}(X).
\end{equation*}
From this, \eqref{F1def2}, \eqref{Adef3}, and \eqref{Bdef2}, we conclude that

\begin{equation*}
\begin{split}
F_1(x,X) \ = \ & -\frac{1}{4}K\left( \frac{1}{2}\right)\mbox{Li}(X) \ + \ \frac{x^{-1/2}}{2}a\left( \frac{1}{x}\right)\left( X-\mbox{Li}(X)\log 2\pi\right) \\
& \ + \ O\left( x^{\frac{1}{2}} X^{\frac{1}{2}}\log X\right) \ + \ O\left( x^{-\frac{1}{4}+\varepsilon} \,\mbox{Li}(X)  \right)\\
& \ + \ O\left(x^{-\frac{3}{2}+\varepsilon}X^{\frac{1}{2}}\log^2X\right)  \ + \  O\left( \frac{xX\log X}{X^{M/2}} \right)
\end{split}
\end{equation*}
uniformly for $x\geq 1$ and $X\geq 2$, where $M>0$ is a fixed arbitrarily large real number. We divide both sides of this equation by $-1$ times the first term on the right-hand side, take $x=X^{\alpha}$, and use the definitions \eqref{F1def} of $F_1(x,X)$ and \eqref{Fdef} of $F(\alpha,X)$ to arrive at Theorem~\ref{thm1} for $\alpha\geq 0$. Theorem~\ref{thm1} for $\alpha\leq 0$ follows from the asymptotic formula \eqref{Li}, the second assertion of Lemma~\ref{abound}, and the fact that $F(\alpha,X)=F(-\alpha,X)$. This latter equation is true because of \eqref{Fdef}, the assumption that $K(\frac{1}{2}+it)=K(\frac{1}{2}-it)$ for real $t$, and the fact that the zeros of $L(s,\chi_p)$ are symmetric about the real axis.

\section{One-level Density}

In this section, we use the asymptotic formula in Theorem~\ref{thm1} to prove Theorem~\ref{thm2}. We take $M=2$, say, in Theorem~\ref{thm1} and arrive at

\begin{equation*}
\begin{split}
F(\alpha,X) \ = \ & -1 \ + \ \left( \frac{1}{2}K\left( \frac{1}{2}\right) \right)^{-1}X^{-|\alpha|/2}a\left( X^{-|\alpha|}\right)\left( \frac{X-\mbox{Li}(X)\log 2\pi}{\mbox{Li}(X)}\right) \\
& \ + \ O\left(X^{\left(-\frac{1}{4}+\varepsilon\right)|\alpha|}\right) \ + \ o(1)
\end{split}
\end{equation*}
as $X\rightarrow \infty$, uniformly for $\alpha\in [-1+\varepsilon,1-\varepsilon]$. We multiply the left-hand side by $\hat{r}(\alpha)$ and integrate over $(-\infty,\infty)$ to deduce that

\begin{equation}\label{rhat1}
\begin{split}
& \int_{-\infty}^{\infty} F(\alpha,X)\,\hat{r}(\alpha)\,d\alpha \ = \  -\int_{-\infty}^{\infty} \hat{r}(\alpha)\,d\alpha \ + \ O\left( \int_{-\infty}^{\infty}X^{\left(-\frac{1}{4}+\varepsilon\right)|\alpha|} \,d\alpha\right) \ + \ o(1) \\
& + \ \left( \frac{1}{2}K\left( \frac{1}{2}\right) \right)^{-1}\left( \frac{X-\mbox{Li}(X)\log 2\pi}{\mbox{Li}(X)}\right) \int_{-\infty}^{\infty}X^{-\alpha/2}a\left( X^{-\alpha}\right)\,\hat{r}(\alpha)\,d\alpha
\end{split}
\end{equation}
as $X\rightarrow \infty$, where we used the fact that $|\hat{r}(\alpha)|\leq \int |r|<\infty$. Note that

\begin{equation}\label{rhat2}
\int_{-\infty}^{\infty}X^{\left(-\frac{1}{4}+\varepsilon\right)|\alpha|} \,d\alpha \ = \ \frac{2}{\left(\frac{1}{4}-\varepsilon\right)\log X} \ = \ o(1)
\end{equation}
as $X\rightarrow \infty$. Also, the definition of $\hat{r}(\alpha)$ and the fact that its support is on $(-1,1)$ imply

\begin{equation}\label{rhat3}
\begin{split}
\int_{-\infty}^{\infty} \hat{r}(\alpha)\,d\alpha \ = \ \int_{-1}^{1} \hat{r}(\alpha)\,d\alpha \ = \ & \int_{-1}^1 \int_{-\infty}^{\infty} r(\beta)e^{-2\pi i \alpha\beta}\,d\beta\,d\alpha \\
\ = \ & 2 \int_{-\infty}^{\infty} r(\alpha) \left( \frac{\sin 2\pi \alpha}{2\pi \alpha}\right)\,d\alpha
\end{split}
\end{equation}
upon interchanging the order of integration and relabeling $\beta$ as $\alpha$. To estimate the last term on the right-hand side of \eqref{rhat1}, we use the Plancherel theorem to write, as in the proof of Theorem~3 of \"{O}zl\"{u}k and Snyder~\cite{ozluksnyder1},

\begin{equation*}
\begin{split}
\int_{-\infty}^{\infty}X^{-\alpha/2}a\left( X^{-\alpha}\right)\,\hat{r}(\alpha)\,d\alpha \ = \ \frac{K(\frac{1}{2})}{\log X} \int_{-\infty}^{\infty}r(\alpha)\,d\alpha \ + \ O\left(\frac{1}{\log^2X}\right)
\end{split}
\end{equation*}
uniformly for $X\geq 2$. The application of the Plancherel theorem is valid because we are assuming that $r\in L^2(-\infty,\infty)$. From this and \eqref{Li}, we arrive at

\begin{equation*}
\begin{split}
\left( \frac{1}{2}K\left( \frac{1}{2}\right) \right)^{-1}\left( \frac{X-\mbox{Li}(X)\log 2\pi}{\mbox{Li}(X)}\right) & \int_{-\infty}^{\infty}X^{-\alpha/2}a\left( X^{-\alpha}\right)\,\hat{r}(\alpha)\,d\alpha \\
\ = \ & 2\int_{-\infty}^{\infty}r(\alpha)\,d\alpha \ + \ O\left(\frac{1}{\log X}\right).
\end{split}
\end{equation*}
It follows from this, \eqref{rhat1}, \eqref{rhat2}, and \eqref{rhat3} that

\begin{equation}\label{rhat4}
\begin{split}
\int_{-\infty}^{\infty} F(\alpha,X)\,\hat{r}(\alpha)\,d\alpha \ = \ 2\int_{-\infty}^{\infty}\left( 1- \frac{\sin 2\pi \alpha}{2\pi \alpha}\right) r(\alpha)\,d\alpha \ + \ o(1)
\end{split}
\end{equation}
as $X\rightarrow \infty$. On the other hand, we insert the definition \eqref{Fdef} of $F(\alpha, X)$ into the integral on the left-hand side and interchange the order of summation to deduce that

\begin{equation*}
\begin{split}
& \int_{-\infty}^{\infty} F(\alpha,X)\,\hat{r}(\alpha)\,d\alpha \\
& \ \ \ \ \ \ = \ \left(\frac{1}{4}K\left( \frac{1}{2}\right)\mbox{Li}(X) \right)^{-1} \sum_{\begin{subarray}{c} p\leq X \\ p\equiv v (\mbox{\scriptsize mod } 4)\end{subarray}} \sum_{\rho(p)} K(\rho) \int_{-\infty}^{\infty} X^{i\alpha\gamma} \,\hat{r}(\alpha)\,d\alpha.
\end{split}
\end{equation*}
This implies

\begin{equation}\label{rhat5}
\int_{-\infty}^{\infty} F(\alpha,X)\,\hat{r}(\alpha)\,d\alpha  \ = \ \left(\frac{1}{4}K\left( \frac{1}{2}\right)\mbox{Li}(X) \right)^{-1} \sum_{\begin{subarray}{c} p\leq X \\ p\equiv v (\mbox{\scriptsize mod } 4)\end{subarray}} \sum_{\rho(p)} K(\rho)\,r\left(\frac{\gamma \log X}{2\pi}\right)
\end{equation}
because

\begin{equation*}
\int_{-\infty}^{\infty} X^{i\alpha\gamma} \,\hat{r}(\alpha)\,d\alpha \ = \ \hat{\hat{r}}\left(-\frac{\gamma \log X}{2\pi} \right) \ = \ r\left(\frac{\gamma \log X}{2\pi}\right).
\end{equation*}
Theorem~\ref{thm2} now follows from \eqref{rhat4} and \eqref{rhat5}.

\section{Nonvanishing at the Central Point}
To prove Theorem~\ref{conditionalnonvanishing}, we follow the argument in the proof of Corollary~3 of \"{O}zl\"{u}k and Snyder~\cite{ozluksnyder2}. Take $K(s)=\exp((s-\frac{1}{2})^2)$, and let $m_p$ denote the multiplicity of the point $1/2$ as a zero of $L(s,\chi_p)$, with $m_p=0$ if $L(\frac{1}{2},\chi_p)\neq 0$. Moreover, let $\lambda<1$ be close to $1$, and let $r(u)=(\sin (\pi \lambda u)/(\pi \lambda u))^2$, so that $\hat{r}(\alpha)=\lambda^{-2}\max\{\lambda-|\alpha|,0\}$. Since $K(\frac{1}{2}+it)>0$, $r(u)\geq 0$, and $r(0)=1$, it follows that

\begin{align*}
\left(\frac{1}{4}\mbox{Li}(X)\right)^{-1} & \sum_{\begin{subarray}{c} p\leq X \\ p\equiv v (\mbox{\scriptsize mod } 4)\end{subarray}}m_p \ = \ \left(\frac{1}{4}\mbox{Li}(X)\right)^{-1} \sum_{\begin{subarray}{c} p\leq X \\ p\equiv v (\mbox{\scriptsize mod } 4)\end{subarray}} \frac{1}{K(\frac{1}{2})} \sum_{\begin{subarray} \rho(p) \\ \rho=\frac{1}{2}\end{subarray}}K(\rho) \\
\ & \leq \ \left(\frac{1}{4}K(\frac{1}{2})\mbox{Li}(X)\right)^{-1} \sum_{\begin{subarray}{c} p\leq X \\ p\equiv v (\mbox{\scriptsize mod } 4)\end{subarray}}  \sum_{\rho(p)}K(\rho) r\left(\frac{\gamma\log X}{2\pi }\right).
\end{align*}
From this and \eqref{rhat5}, we deduce that

\begin{equation}\label{conditionalnonvanishing2}
\left(\frac{1}{4}\mbox{Li}(X)\right)^{-1} \sum_{\begin{subarray}{c} p\leq X \\ p\equiv v (\mbox{\scriptsize mod } 4)\end{subarray}}m_p \ \leq \ \int_{-\infty}^{\infty} F(\alpha,X) \hat{r}(\alpha)\,d\alpha.
\end{equation}
We next estimate the integral by evaluating each term on the right-hand side of \eqref{rhat1}. To evaluate the integral involving $a(X^{-\alpha})$, observe that the Plancherel theorem gives

\begin{equation*}
\int_{-\infty}^{\infty}X^{-\alpha/2}a\left( X^{-\alpha}\right)\,\hat{r}(\alpha)\,d\alpha \ = \ \int_{-\infty}^{\infty}\widehat{X^{-\alpha/2}a\left( X^{-\alpha}\right)}\,r(\alpha)\,d\alpha.
\end{equation*}
We write the definition of the Fourier transform of $X^{-\alpha/2}a\left( X^{-\alpha}\right)$ and make a change of variable to arrive at

\begin{equation*}
\begin{split}
& \int_{-\infty}^{\infty}X^{-\alpha/2}a\left( X^{-\alpha}\right)\,\hat{r}(\alpha)\,d\alpha \\
& \ = \ \frac{1}{\log X} \int_{-\infty}^{\infty}\int_0^{\infty}t^{1/2} a(t) e^{2\pi i \alpha \log t/\log X} \,\frac{dt}{t}  \,r(\alpha)\,d\alpha.
\end{split}
\end{equation*}
Interchanging the order of integration and using the definition of $\hat{r}(\alpha)$, we deduce that

\begin{equation*}
\int_{-\infty}^{\infty}X^{-\alpha/2}a\left( X^{-\alpha}\right)\,\hat{r}(\alpha)\,d\alpha \ = \ \frac{1}{\lambda^2\log X} \int_{X^{-\lambda}}^{X^{\lambda}}t^{1/2} a(t)\left( \lambda- \left| \frac{\log t}{\log X}\right|\right) \,\frac{dt}{t}.
\end{equation*}
The contribution of the term $\left| \frac{\log t}{\log X}\right|$ is $O((\lambda\log X)^{-2})$, and we may use Lemma~\ref{abound} to extend the interval of integration to $(0,\infty)$ with error $O(X^{-\lambda})$. It follows that

\begin{equation*}
\int_{-\infty}^{\infty}X^{-\alpha/2}a\left( X^{-\alpha}\right)\,\hat{r}(\alpha)\,d\alpha \ = \ \frac{1}{\lambda\log X} \int_{0}^{\infty}t^{1/2} a(t)\,\frac{dt}{t} \ + \ O_{\lambda}\left(\frac{1}{\log^2 X}\right).
\end{equation*}
By Mellin inversion and the definition \eqref{aadef} of $a(t)$, the first term on the right-hand side equals $(\lambda\log X)^{-1}K(\frac{1}{2})$. It now follows that the last integral on the right-hand side of \eqref{rhat1} equals

\begin{equation*}
\left( \frac{1}{2}K\left( \frac{1}{2}\right) \right)^{-1}\left( \frac{X-\mbox{Li}(X)\log 2\pi}{\mbox{Li}(X)\log X}\right) \left(\frac{1}{\lambda}K\left( \frac{1}{2}\right) \ + \ O\left( \frac{1}{\log X}\right) \right) \ = \ \frac{2}{\lambda} \ + \ o(1)
\end{equation*}
as $X\rightarrow \infty$, by the fact that Li$(X)\log X\sim X$. It follows from this, \eqref{rhat1}, \eqref{rhat2}, and $-\int_{-\infty}^{\infty}\hat{r}(\alpha)\,d\alpha =-r(0)=-1$ that

\begin{equation*}
\int_{-\infty}^{\infty} F(\alpha,X) \hat{r}(\alpha)\,d\alpha \ = \ -1 \ + \ \frac{2}{\lambda} \ + \ o(1).
\end{equation*}
From this and \eqref{conditionalnonvanishing2}, we arrive at

\begin{equation*}
\left(\frac{1}{4}\mbox{Li}(X)\right)^{-1} \sum_{\begin{subarray}{c} p\leq X \\ p\equiv v (\mbox{\scriptsize mod } 4)\end{subarray}}m_p \ \leq \ \ -1 \ + \ \frac{2}{\lambda} \ + \ o(1).
\end{equation*}
Now the functional equation for $L(s,\chi_p)$ implies that $m_p$ is always even. Therefore

\begin{equation*}
\left(\frac{1}{2}\mbox{Li}(X)\right)^{-1} \sum_{\begin{subarray}{c} p\leq X \\ p\equiv v (\mbox{\scriptsize mod } 4) \\ L(\frac{1}{2},\chi_p)=0\end{subarray}}1 \ \leq \ \ -\frac{1}{4} \ + \ \frac{1}{2\lambda} \ + \ o(1).
\end{equation*}
Since $\pi(X;4,v)\sim \frac{1}{2}\mbox{Li}(X)$, we arrive at Theorem~\ref{conditionalnonvanishing} by letting $\lambda<1$ tend to $1$.

\section{The Ratios Conjecture for $L(s,\chi_{p})$}\label{ratiossection}

In this section we use the ratios conjecture as originally developed by Conrey, Farmer and Zirnbauer in \cite{CFZ} to compute the one-level density function for zeros of the family of quadratic Dirichlet $L$-functions associated with the character $\chi_{p}$, complete with lower order terms. The ratios conjecture will provide a result consistent with Theorem~\ref{thm2}, but with no constraints on the support of the Fourier transform of the test function.

We follow closely the calculations of Conrey and Snaith \cite{CS}. For further details we recommend the reader to check the many applications of the ratios conjecture given in their paper.
Consider the sum of ratios

\begin{equation*}
R_P(\alpha,\beta) \ := \ \sum_{\substack{p\leq X \\ p\equiv v(\bmod4)}}\frac{L(\tfrac{1}{2}+\alpha,\chi_{p})}{L(\tfrac{1}{2} +\beta,\chi_{p})}.
\end{equation*}
%
%
As part of the ratios conjecture as presented in \cite{CFZ, CS}, we replace the $L(s,\chi_{p})$ in the numerator by the approximate functional equation

\begin{equation}
L(\tfrac{1}{2}+\alpha,\chi_{p})=\sum_{m<x}\frac{\chi_{p}(m)}{m^{1/2+\alpha}} +\left(\frac{p}{\pi}\right)^{-\alpha} \frac{\Gamma(1/4-\alpha/2)}{\Gamma(1/4+\alpha/2)} \sum_{n<y}\frac{\chi_{p}(n)}{n^{1/2-\alpha}}+\ \text{Remainder},\nonumber
\end{equation}
where $xy=p/(2\pi)$, and we replace the $L(s,\chi_{p})$ in the denominator by their infinite series

\begin{equation}
\frac{1}{L(s,\chi_{p})}=\sum_{h=1}^{\infty}\frac{\mu(h)\chi_{p}(h)}{h^{s}}.\nonumber
\end{equation}
We consider each of the 2 pieces separately and average term-by-term within those pieces. We only retain the terms where we are averaging over squares; in other words we use the main part of the following orthogonality relation for quadratic characters over primes:

\begin{equation}
\sum_{\substack{p\leq X \\ p\equiv v(\bmod4)}} \chi_p(n) =
\left\{
    \begin{array}{ll}
        X^{*} + \ \text{small}  & \mbox{if } n \ \text{is a square} \\
        \text{small} & \mbox{if } n \ \text{is not a square},
    \end{array}
\right.\nonumber
\end{equation}
where, for brevity, we denote

\begin{equation}
X^{*}=\sum_{\substack{p\leq X \\ p\equiv v(\bmod4)}}1 \ \sim \ \frac{1}{2}\mbox{Li}(X), \nonumber
\end{equation}
where Li$(X)$ is defined by \eqref{Li}. We compute these diagonal terms and complete the sums by extending the ranges of the summation variables to infinity. We then calculate the Euler products and express these sums as ratios of zeta functions. The sum of these expressions, one for each piece of the approximate functional equation, will provide us with the required conjecture.

We first restrict our attention to the first piece of the approximate functional equation. In other words, we consider

\begin{equation}
\sum_{\substack{p\leq X \\ p\equiv v(\bmod4)}}\sum_{h,m}\frac{\mu(h) \chi_{p}(hm)}{h^{1/2+\beta}m^{1/2+\alpha}}. \nonumber
\end{equation}
We retain only the terms for which $hm$ is square to write this as

\begin{equation}
X^{*}\sum_{hm=\Box}\frac{\mu(h)}{h^{1/2+\beta}m^{1/2+\alpha}}. \nonumber
\end{equation}
We express this sum as an Euler product

\begin{equation}
\prod_{p}\sum_{\substack{h+m \\ \text{even}}}\frac{\mu(p^{h})}{p^{h(1/2+\beta)+m(1/2+\alpha)}}. \nonumber
\end{equation}
The effect of $\mu(p^{h})$ is to limit the choices for $h$ to $0$ or $1$. When $h=0$ we have

\begin{equation}
\sum_{\substack{m \\ \text{even}}}\frac{1}{p^{m(1/2+\alpha)}} =1+\sum_{m=1}^{\infty}\frac{1}{p^{m(1+2\alpha)}} =1+ \frac{1}{p^{1+2\alpha}}\frac{1}{\left(1-\frac{1}{p^{1+2\alpha}}\right)}, \nonumber
\end{equation}
and when $h=1$ we obtain a contribution of

\begin{equation}
\sum_{\substack{m \\ \text{odd}}}\frac{\mu(p)}{p^{1/2+\beta+m(1/2+\alpha)}}= -\frac{1}{p^{1+\alpha+\beta}}\frac{1}{ \left(1-\frac{1}{p^{1+2\alpha}}\right)}. \nonumber
\end{equation}
Thus, the Euler product simplifies to

\begin{eqnarray}
& &\prod_{p}\left(1+\frac{1}{p^{1+2\alpha}} \frac{1}{\left(1-\frac{1}{p^{1+2\alpha}}\right)}- \frac{1}{p^{1+\alpha+\beta}} \frac{1}{\left(1-\frac{1}{p^{1+2\alpha}}\right)}\right)\nonumber \\
&=&\frac{\zeta(1+2\alpha)}{\zeta(1+\alpha+\beta)} \prod_{p}\left(1+\frac{1}{p^{1+2\alpha}} \frac{1}{\left(1-\frac{1}{p^{1+2\alpha}}\right)}- \frac{1}{p^{1+\alpha+\beta}} \frac{1}{\left(1-\frac{1}{p^{1+2\alpha}}\right)}\right)\nonumber \\
&\times&\left(1-\frac{1}{p^{1+2\alpha}}\right) \left(1-\frac{1}{p^{1+\alpha+\beta}}\right)^{-1}.\nonumber \\
&=&\frac{\zeta(1+2\alpha)}{\zeta(1+\alpha+\beta)}.\nonumber
\end{eqnarray}

The other piece can be determined by recalling the functional equation

\begin{equation}
L(\tfrac{1}{2}+\alpha,\chi_{p})=\left(\frac{p}{\pi}\right)^{-\alpha} \frac{\Gamma(1/4-\alpha/2)}{\Gamma(1/4+\alpha/2)} L(\tfrac{1}{2}-\alpha,\chi_{p}).\nonumber
\end{equation}
We thus infer that Conjecture~\ref{con:1} is true.


For applications to the one-level density we note that

\begin{equation}
\sum_{\substack{p\leq X \\ p\equiv v(\bmod4)}}\frac{L^{'}(\tfrac{1}{2}+r,\chi_{p})}{ L(\tfrac{1}{2}+r,\chi_{p})}= \frac{d}{d\alpha}R_{P}(\alpha;\beta)\bigg|_{\alpha=\beta=r}.\nonumber
\end{equation}
Now

\begin{equation}
\frac{d}{d\alpha}\frac{\zeta(1+2\alpha)}{ \zeta(1+\alpha+\beta)} \bigg|_{ \alpha=\beta=r}=\frac{\zeta^{'}(1+2r)}{\zeta(1+2r)}\nonumber
\end{equation}
and

\begin{align*}
&\frac{d}{d\alpha}\left(\frac{p}{\pi}\right)^{-\alpha} \frac{\Gamma(1/4-\alpha/2)}{\Gamma(1/4+\alpha/2)} \frac{\zeta(1-2\alpha)}{\zeta(1-\alpha+\beta)}\bigg|_{ \alpha=\beta=r}\\
& \qquad =-\left(\frac{p}{\pi}\right)^{-r} \frac{\Gamma(1/4-r/2)}{\Gamma(1/4+r/2)}\zeta(1-2r).\nonumber
\end{align*}
Because of the uniformity in the parameters $\alpha$ and $\beta$, we may differentiate the conjectural formula with respect to these parameters and the results are valid in the same ranges and with the same error terms. Thus the ratios conjecture implies that the following holds.

\begin{thm}
\label{thm:3}
Assuming Conjecture \ref{con:1}, $1/\log X\ll \mathfrak{R}r<\tfrac{1}{4}$ and $\mathfrak{I}r\ll_{\epsilon}X^{1-\epsilon}$ we have

\begin{align}
& \sum_{\substack{p\leq X \\ p\equiv v(\bmod4)}}\frac{L^{'}(\tfrac{1}{2}+r,\chi_{p})}{ L(\tfrac{1}{2}+r,\chi_{p})}\nonumber \\
& \qquad= \sum_{\substack{p\leq X \\ p\equiv v(\bmod4)}}\left(\frac{\zeta^{'}(1+2r)}{ \zeta(1+2r)}-\left(\frac{p}{\pi}\right)^{-r} \frac{\Gamma(1/4-r/2)}{\Gamma(1/4+r/2)} \zeta(1-2r)\right)+O(X^{1/2+\epsilon}).\nonumber
\end{align}
\end{thm}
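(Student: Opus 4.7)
The plan is to obtain Theorem~\ref{thm:3} directly from Conjecture~\ref{con:1} by differentiating in $\alpha$ and specializing to $\alpha=\beta=r$. The starting identity, noted just above the theorem, is
\[
\sum_{\substack{p\leq X \\ p\equiv v(\bmod 4)}} \frac{L'(\tfrac{1}{2}+r,\chi_p)}{L(\tfrac{1}{2}+r,\chi_p)} \ = \ \frac{\partial}{\partial \alpha} R_P(\alpha,\beta)\bigg|_{\alpha=\beta=r}.
\]
First I would apply $\partial/\partial\alpha$ to both sides of the formula in Conjecture~\ref{con:1} and then set $\alpha=\beta=r$. Because the conjectured asymptotic is assumed to hold uniformly on a small polydisk around $(r,r)$ (given the ranges of $\mathfrak{R}\alpha,\mathfrak{R}\beta$ and $\mathfrak{I}\alpha,\mathfrak{I}\beta$), Cauchy's integral formula converts the remainder bound $O(X^{1/2+\epsilon})$ into the same bound on its $\alpha$-derivative, so the error term survives the differentiation.

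Next I would compute the two resulting pieces. The first ratio $\zeta(1+2\alpha)/\zeta(1+\alpha+\beta)$ is analytic near $\alpha=\beta=r$ because $\mathfrak{R}r\gg 1/\log X$, and the quotient rule gives
\[
\frac{\partial}{\partial\alpha}\frac{\zeta(1+2\alpha)}{\zeta(1+\alpha+\beta)}\bigg|_{\alpha=\beta=r} \ = \ \frac{2\zeta'(1+2r)\,\zeta(1+2r)\,-\,\zeta(1+2r)\,\zeta'(1+2r)}{\zeta(1+2r)^{2}} \ = \ \frac{\zeta'(1+2r)}{\zeta(1+2r)}.
\]
The second piece is more delicate because $1/\zeta(1-\alpha+\beta)$ is being evaluated at a point where $\zeta$ has its simple pole. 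Using the Laurent expansion $1/\zeta(1+u)=u+O(u^{2})$ gives $1/\zeta(1-\alpha+\beta)=(\beta-\alpha)+O((\beta-\alpha)^{2})$, so the whole term vanishes at $\alpha=\beta$. Its $\alpha$-derivative at $\alpha=\beta=r$ picks up a $-1$ from this prefactor, while all remaining factors are analytic and simply evaluate at $r$; this produces $-\left(p/\pi\right)^{-r}\frac{\Gamma(1/4-r/2)}{\Gamma(1/4+r/2)}\zeta(1-2r)$.

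Summing the two contributions over primes $p\leq X$ with $p\equiv v\pmod 4$ yields the formula of the theorem, with the error term controlled as described above. The only genuine subtlety, and where I would expect a careless differentiation to go wrong, is the pole of $\zeta$ at $s=1$ inside the second ratio: the apparent indeterminacy at $\alpha=\beta$ is in fact a first-order zero, and differentiating across that zero is precisely what supplies the minus sign and cancels the $1/\zeta$ factor from the conjectured formula.
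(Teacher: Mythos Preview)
Your proposal is correct and follows essentially the same route as the paper: differentiate the formula of Conjecture~\ref{con:1} in $\alpha$, set $\alpha=\beta=r$, compute the two derivatives (using the quotient rule for the first piece and the first-order zero of $1/\zeta(1-\alpha+\beta)$ at $\alpha=\beta$ for the second), and carry the $O(X^{1/2+\epsilon})$ through the differentiation by uniformity. Your explicit invocation of Cauchy's integral formula and the Laurent expansion $1/\zeta(1+u)=u+O(u^2)$ supplies a bit more detail than the paper gives, but the argument is the same.
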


We now use Theorem \ref{thm:3} to compute the one-level density function for zeros of quadratic Dirichlet $L$-functions associated to $\chi_{p}$. For simplicity, we assume that $f(z)$ is holomorphic throughout the strip $|\mathfrak{I}z|<2$, real on the real line, and even, and we suppose that $f(x)\ll1/(1+x^{2})$ as $x\rightarrow\infty$.

We consider

\begin{equation}
S_{1}(f):=\sum_{\substack{p\leq X \\ p\equiv v(\bmod4)}}\sum_{\rho (p)}f(\gamma),\nonumber
\end{equation}
where inner sum is over the zeros $\rho=\frac{1}{2}+i\gamma$ of $L(s,\chi_{p})$. Recall that we are assuming the Generalized Riemann Hypothesis (GRH).

We have, by the argument principle, that

\begin{equation}
S_{1}(f)=\sum_{\substack{p\leq X \\ p\equiv v(\bmod4)}}\frac{1}{2\pi i}\left(\int_{(c)}-\int_{(1-c)}\right) \frac{L^{'}(s,\chi_{p})}{L(s,\chi_{p})}f(-i(s-1/2))ds,\nonumber
\end{equation}
where $(c)$ denotes a vertical line from $c-i\infty$ to $c+i\infty$ and $3/4>c>1/2+1/\log X$. The integral on the $c$-line is

\begin{equation}
\label{eq:3.4}
\frac{1}{2\pi}\int_{-\infty}^{\infty}f(t-i(c-1/2)) \sum_{\substack{p\leq X \\ p\equiv v(\bmod4)}} \frac{L^{'}(1/2+(c-1/2+it),\chi_{p})}{L(1/2+(c-1/2+it),\chi_{p})}dt.
\end{equation}
It follows from the GRH that on the path of integration $(c)$,

\begin{equation}
\label{eq:3.5}
\frac{L^{'}(s,\chi_{p})}{L(s,\chi_{p})}\ll\log^{2}(|s|p).
\end{equation}
For $|t|>X^{1-\epsilon}$, we estimate the integral using \eqref{eq:3.5} and the bound on $f(x)$, and the result is $X^{\epsilon}$. By the ratios conjecture, i.e., by Theorem \ref{thm:3}, if $|t|<X^{1-\epsilon}$ then the sum over $p$ in \eqref{eq:3.4} is

\begin{align}
\label{eq:3.6}
&\sum_{\substack{p\leq X \\ p\equiv v(\bmod4)}}\left(\frac{\zeta^{'}(1+2r)}{\zeta(1+2r)} -\left(\frac{p}{\pi}\right)^{-r}\frac{\Gamma(1/4-r/2)}{ \Gamma(1/4+r/2)}\zeta(1-2r)\right)\Bigg|_{r=c-1/2+it} \nonumber \\
& \qquad +O(X^{1/2+\epsilon}).\nonumber
\end{align}
Since this quantity is $\ll X^{1+\epsilon}$ for $|t|<X^{1-\epsilon}$ and $f(t)\ll 1/t^{2}$, we may extend the integration in $t$ to infinity. Finally, since the integrand is regular at $r=0$, we may move the path of integration to $c=1/2$ and so obtain

\begin{align}
& \frac{1}{2\pi}\int_{-\infty}^{\infty}f(t)\sum_{\substack{p\leq X \\ p\equiv v(\bmod4)}}\left(\frac{\zeta^{'}(1+2it)}{ \zeta(1+2it)} -\left(\frac{p}{\pi}\right)^{-it} \frac{\Gamma(1/4-it/2)}{\Gamma(1/4+it/2)}\zeta(1-2it)\right)dt\nonumber \\
& \qquad +O(X^{1/2+\epsilon})\nonumber
\end{align}
For the integral on the $(1-c)$-line, we change variables, letting $s\rightarrow1-s$, and we use the functional equation

\begin{equation}
\frac{L^{'}(1-s,\chi_{p})}{L(1-s,\chi_{p})}= \frac{X^{'}(s,\chi_{p})}{X(s,\chi_{p})}- \frac{L^{'}(s,\chi_{p})}{L(s,\chi_{p})}\nonumber
\end{equation}
where

\begin{equation}
\frac{X^{'}(s,\chi_{p})}{X(s,\chi_{p})}=-\log\frac{p}{\pi}- \frac{1}{2}\frac{\Gamma^{'}}{\Gamma}\left(\frac{1-s}{2}\right) -\frac{1}{2}\frac{\Gamma^{'}}{\Gamma}\left(\frac{s}{2}\right).\nonumber
\end{equation}
The contribution from the $L^{'}/L$ term is now exactly as before, since $f$ is even. Thus, we have proved Theorem ~\ref{thm:3.1}.




To prove Corollary~\ref{thm:3.2}, we define

$$f(t)=g\left(\frac{t\log X}{2\pi}\right).$$
Scaling the variable $t$ from Theorem \ref{thm:3.1} as $\tau=(t\log X)/(2\pi)$, we write

\begin{align*}
&\sum_{\substack{p\leq X \\ p\equiv v(\bmod4)}}\sum_{\rho (p)}g\left(\frac{\gamma\log X}{2\pi}\right)=\frac{1}{\log X}\int_{-\infty}^{\infty}g(\tau)\sum_{\substack{p\leq X \\ p\equiv v(\bmod4)}}\Bigg(\log\frac{p}{\pi}\\
&\qquad +\frac{1}{2} \frac{\Gamma^{'}}{\Gamma}\left(1/4+\frac{i\pi\tau}{\log X}\right)+\frac{1}{2}\frac{\Gamma^{'}}{\Gamma}\left(1/4-\frac{i\pi\tau}{\log X}\right)+2\Bigg(\frac{\zeta^{'}(1+\tfrac{4i\pi\tau}{\log X})}{\zeta(1+\tfrac{4i\pi\tau}{\log X})}\nonumber \\
& \qquad -e^{-(2\pi i\tau/\log X)\log(p/\pi)}\frac{\Gamma(1/4-\tfrac{i\pi\tau}{\log X})}{\Gamma(1/4+\tfrac{i\pi\tau}{\log X})}\zeta\left(1-\frac{4i\pi\tau}{\log X}\right)\Bigg)\Bigg)d\tau +O(X^{1/2+\epsilon}).\nonumber
\end{align*}
For large $X$, only the $\log(p/\pi)$ term, the $\zeta^{'}/\zeta$ term, and the final term in the integral contribute, yielding the asymptotic

\begin{align*}
&\sum_{\substack{p\leq X \\ p\equiv v(\bmod4)}}\sum_{\rho (p)}g\left(\frac{\gamma\log X}{2\pi}\right)\\
& \qquad \sim\frac{1}{\log X}\int_{-\infty}^{\infty}g(\tau)\left(X^{*}\log X-X^{*}\frac{\log X}{2\pi i\tau}+X^{*}\frac{e^{-2\pi i\tau}}{2\pi i\tau}\log X\right)d\tau.\nonumber
\end{align*}
However, since $g$ is an even function, the middle term above drops out and the last term can be duplicated with a change of sign of $\tau$, leaving

\begin{equation*}
\lim_{X\rightarrow\infty}\frac{1}{X^{*}}\sum_{\substack{p\leq X \\ p\equiv v(\bmod4)}}\sum_{\rho (p)}g\left( \frac{\gamma\log X}{2\pi}\right)= \int_{-\infty}^{\infty}g(\tau)\left(1+\frac{e^{-2\pi i\tau}}{4\pi i\tau}+\frac{e^{2\pi i\tau}}{-4\pi i\tau}\right)d\tau.
\end{equation*}
This is exactly the expected asymptotic formula, and proves Corollary~\ref{thm:3.2}.

\vspace{0.5in}

\noindent \textbf{Acknowledgment:} We would like to thank Professor Zeev Rudnick for his comments on a previous version of the manuscript. \\



{\frenchspacing

}

\end{document}